\documentclass[11pt, reqno]{amsart}
\usepackage[margin=1.05in]{geometry}
\usepackage{amsmath,amssymb,mathtools}
\usepackage{enumitem}
\usepackage{hyperref}
\usepackage[T1]{fontenc}
\usepackage{lmodern}
\usepackage{xcolor}
\usepackage[normalem]{ulem}

\usepackage{eucal}
\usepackage{soul}
\usepackage{tikz-cd}
\usetikzlibrary{arrows.meta,positioning}
\usepackage{float}
\sethlcolor{yellow}

\usepackage[
  backend=biber,
  style=numeric,
  maxbibnames=6,
  minbibnames=3
]{biblatex}

\AtBeginBibliography{\small}
\usepackage{aliascnt}
\usepackage{cleveref}

\theoremstyle{plain}

\newtheorem{theorem}{Theorem}

\newaliascnt{proposition}{theorem}

\aliascntresetthe{proposition}

\newaliascnt{lemma}{theorem}
\newtheorem{lemma}[lemma]{Lemma}
\aliascntresetthe{lemma}

\newaliascnt{corollary}{theorem}
\newtheorem{corollary}[corollary]{Corollary}
\aliascntresetthe{corollary}

\theoremstyle{definition}

\newaliascnt{definition}{theorem}

\aliascntresetthe{definition}

\newtheorem*{remark}{Remark}

\theoremstyle{plain}

\newtheorem{question}{Question}
\crefname{question}{question}{questions}
\Crefname{question}{Question}{Questions}

\newcommand{\Ric}{\operatorname{Ric}}

\renewcommand{\sec}{\operatorname{sec}}

\newcommand{\Fix}{\operatorname{Fix}}

\begin{document}

\title[RICCI-FLAT FUNDAMENTAL-GROUP INSTABILITY]
{Instability of the Fundamental Group for Ricci-Flat Kähler 4-Manifolds}

\author[Alejandro Bellati]{Alejandro Bellati$^{1}$}
\author[Ignacio Bustamante]{Ignacio Bustamante$^{2}$}

\makeatletter
\renewcommand{\@setauthors}{%
  \begingroup
  \def\thanks{\protect\thanks@warning}%
  \trivlist
  \centering\footnotesize
  \@topsep30\p@\relax
  \advance\@topsep by -\baselineskip
  \item\relax
  \author@andify\authors
  \def\\{\protect\linebreak}%
  \MakeUppercase{\authors}%
  \endtrivlist
  \endgroup
  \vspace{0.2em}
  {\centering\footnotesize
  $^{1}$Department of Mathematics, Princeton University, USA.
    {\itshape Email address:}
  {\ttfamily abellati@math.princeton.edu}\par
  $^{2}$IMERL, Universidad de la República, Uruguay.
    {\itshape Email address:}
  {\ttfamily ibustamante@fing.edu.uy}\par
  }
  \vspace{0.7em}
}
\makeatother

\date{}

\begin{abstract}
We show that the fundamental group is not locally stable under non-collapsed Gromov--Hausdorff convergence, even among closed Ricci-flat K\"ahler four-manifolds. More precisely, we construct families of Ricci-flat K\"ahler metrics on $K3$ surfaces and Enriques surfaces that converge to the same compact flat orbifold. Using the same mechanism, we further construct two families of closed locally hyperk\"ahler Ricci-flat four-manifolds, with fundamental groups $\mathbb Z_2$ and $\mathbb Z_2\times\mathbb Z_2$, respectively, which converge to the same compact flat orbifold.
\end{abstract}

\maketitle 

\section{Introduction}

A classical problem in Riemannian geometry is to understand how geometric bounds constrain the topology of a manifold. A typical setting is that of closed $n$-dimensional Riemannian manifolds satisfying a curvature bound together with
\[
\operatorname{diam}(M)\le D,
\qquad
\operatorname{vol}(M)\ge \mathrm{v}>0.
\]
The strength of the resulting topological conclusions depends strongly on the type of curvature control imposed.

A basic reference point is Cheeger's finiteness theorem \cite{Cheeger1970, Cheeger1967} (see also \cite{Peters1984}): under a two-sided sectional curvature bound
\[
|\sec_M|\le C,
\]
together with the diameter and volume bounds above, only finitely many diffeomorphism types can occur. This is a consequence of the fact that the space of such manifolds is precompact in the $C^{1,\alpha}$ topology. If the upper sectional curvature bound is removed and one assumes only
\[
\sec_M\ge K,
\]
the limiting theory becomes singular: the class is still precompact in the Gromov--Hausdorff topology, but its closure contains Alexandrov spaces \cite{BuragoGromovPerelman1992}. Nevertheless, a strong form of topological stability survives. By Perelman's stability theorem \cite{Perelman1991,Kapovitch2007}, if
\[
M_i\xrightarrow{GH}X
\]
is a non-collapsing sequence with a uniform lower sectional curvature bound, then, for all sufficiently large $i$, the spaces $M_i$ are homeomorphic to $X$. Thus, although smooth compactness is lost, Perelman's theorem still yields strong topological control: along any non-collapsing Gromov--Hausdorff convergent sequence, all sufficiently large terms are homeomorphic to the limit. Combined with Gromov--Hausdorff precompactness, this implies that only finitely many homeomorphism types can occur.

The situation changes markedly when one further replaces sectional curvature control by a lower Ricci curvature bound. Although Gromov--Hausdorff precompactness still holds under the corresponding
curvature and diameter bounds, the topological conclusions are substantially weaker. In particular, one can no longer expect control of the homotopy type. Indeed, Perelman \cite{Perelman1997} constructed manifolds with positive Ricci curvature, uniformly bounded diameter, and volume uniformly bounded away from zero, with arbitrarily large topology.

Despite this loss of control over the homotopy type, lower Ricci curvature still imposes strong restrictions on the fundamental group. A theorem of Anderson \cite{Anderson1990Short} shows that, for fixed $n$, $K$, $D$, and $\mathrm v>0$, among closed $n$-manifolds satisfying
\begin{equation}\label{eq:geometric-bounds}
\operatorname{Ric}_M\ge (n-1)K,
\qquad
\operatorname{diam}(M)\le D,
\qquad
\operatorname{vol}(M)\ge \mathrm{v},
\end{equation}
only finitely many isomorphism types of fundamental groups can occur. When $K\ge0$, further restrictions are available. If $K=0$, the Cheeger--Gromoll splitting theorem implies that the fundamental group is virtually abelian, while for $K>0$ the Bonnet--Myers theorem implies that it is finite.

Anderson's finiteness theorem controls the collection of fundamental groups
that may occur under \eqref{eq:geometric-bounds}, but it does not imply that
the fundamental group is stable under Gromov--Hausdorff convergence. An example of Otsu \cite{Otsu1991} exhibits
the failure directly at the level of the fundamental group: there is a
sequence of metrics $g_i$ on $S^3\times\mathbb{RP}^2$ satisfying bounds of
the form \eqref{eq:geometric-bounds} such that
\[
(S^3\times\mathbb{RP}^2,g_i)
\xrightarrow{GH}
\Sigma(S^2\times\mathbb{RP}^2).
\]
Here $\Sigma$ denotes the spherical suspension. The limit is simply connected, whereas
\[
\pi_1(S^3\times\mathbb{RP}^2)\cong\mathbb Z_2.
\]

Nevertheless, some weaker forms of stability survive. Pan and Wei \cite{PanWei2022}, building on earlier work of Sormani and Wei \cite{SormaniWei2001}, proved that for a sequence $M_i\xrightarrow{GH}X$ satisfying \eqref{eq:geometric-bounds}, there is, for every sufficiently large $i$, a surjective homomorphism
\[
\pi_1(M_i)\longrightarrow\pi_1(X).
\]
At the abelian level the situation is more rigid: the first Betti numbers must eventually agree, as follows from the results of Santos-Rodr\'iguez and Zamora \cite{SantosRodriguezZamora2023} together with the preceding surjectivity.

This naturally leads to a local version of Anderson's finiteness theorem: although only finitely many isomorphism types of fundamental groups can occur under \eqref{eq:geometric-bounds}, must sufficiently close manifolds in this class have the same fundamental group? J. Pan \cite{Pan2025} formulated this question as follows.
\begin{question}\label{q:pan-stability}
For fixed $n,D,\mathrm v>0$, does there exist $\varepsilon=\varepsilon(n,D,\mathrm v)>0$ such that any two closed $n$-manifolds $M$ and $N$ satisfying
\[
\operatorname{Ric}\ge -(n-1),
\qquad
\operatorname{diam}\le D,
\qquad
\operatorname{vol}\ge \mathrm v,
\]
and
\[
d_{GH}(M,N)\le\varepsilon
\]
have isomorphic fundamental groups?
\end{question}

By Gromov's compactness theorem, a negative answer to \Cref{q:pan-stability} can equivalently be exhibited by two sequences $(M_i)$ and $(N_i)$ satisfying the same uniform geometric bounds \eqref{eq:geometric-bounds} and such that, after passing to a subsequence,
\[
M_i\xrightarrow{GH}X,
\qquad
N_i\xrightarrow{GH}X,
\qquad
\pi_1(M_i)\not\cong\pi_1(N_i)
\]
for every $i$. The common limit $X$ must necessarily be singular. Indeed, if $X$ were a smooth $n$-manifold, then a theorem of Cheeger--Colding \cite[Theorem~A.1.12]{CheegerColding1997} (see also \cite{Colding1997Volume}) would imply that both $M_i$ and $N_i$ are diffeomorphic to $X$ for all sufficiently large $i$, and hence
\[
\pi_1(M_i)\cong\pi_1(N_i).
\]
Brena \cite{Brena2026} answered \Cref{q:pan-stability} in the negative. More precisely, he constructed two sequences of closed four-manifolds $(M_i)$ and $(N_i)$ satisfying \Cref{eq:geometric-bounds} with $K=0$, with
\[
\pi_1(M_i)\cong\mathbb Z_2,
\qquad
\pi_1(N_i)=1,
\]
such that both sequences converge in the Gromov--Hausdorff sense to the same limit
\[
X=\Sigma(cS^3/\mu_4).
\]
Here $c>0$ is a sufficiently small constant, $\Sigma$ denotes the spherical suspension, and $\mu_4$ acts diagonally on $S^3\subset\mathbb C^2$ by
\[
(z_1,z_2)\longmapsto(iz_1,iz_2).
\]
The quotient $S^3/\mu_4$ is the lens space $L(4,1)$.

In view of the existing counterexample, a natural next question is whether this
instability persists under substantially stronger curvature assumptions. We focus on dimension four, where uniform two-sided Ricci curvature bounds, together with bounded diameter and non-collapsing, already impose much stronger compactness and topological restrictions. By the work of Cheeger--Naber \cite{CheegerNaber2015}, these assumptions imply diffeomorphism finiteness, generalizing Cheeger's finiteness theorem in dimension four under two-sided Ricci bounds.

More generally, Anderson--Cheeger \cite{AndersonCheeger1991} obtained such finiteness under two-sided Ricci curvature bounds together with an
$L^{n/2}$ bound on the full curvature tensor; see also \cite{JiangWei2025} for related results under weaker curvature hypotheses.
In dimension four, the corresponding singular degeneration theory describes the limits in terms of Riemannian orbifolds and Ricci-flat ALE bubbles \cite{Nakajima1988,BandoKasueNakajima1989,Anderson1989Einstein,Tian1990Calabi,CheegerNaber2015}. The loss of compactness can be organized into a finite bubble-tree picture, as in \cite{Bando1990,AndersonCheeger1991}.

This  suggests asking whether fundamental-group instability can persist under uniform two-sided Ricci curvature bounds. Here we ask for more: can it
persist when the metric satisfies an exact geometric equation? A natural setting is that of Einstein metrics,
\[
\operatorname{Ric}_g=\lambda g,
\]
and, in particular, Ricci-flat metrics, corresponding to $\lambda=0$. Thus we ask whether the instability exhibited in the lower-Ricci setting can still occur among closed Einstein four-manifolds, and especially in the Ricci-flat class.

The Einstein condition imposes restrictions that are absent under lower Ricci curvature bounds, or even two-sided Ricci curvature bound. Globally, the Chern--Gauss--Bonnet and Hirzebruch
signature formulas imply the Hitchin--Thorpe inequality
\[
2\chi(M)\geq 3|\tau(M)|
\]
for every closed oriented Einstein 4-manifold. There are also stronger local
restrictions on possible degenerations. In particular, the example shown in \cite{Brena2026} does not
directly extend to the two-sided Ricci-bounded setting. Its common limit is
\(
\Sigma(cS^3/\mu_4),
\)
whose tangent cones at the suspension points are
\(
C(cS^3/\mu_4).
\)
For $c\neq1$ these cones are not Ricci-flat, whereas tangent cones arising from non-collapsed limits with a uniform two-sided Ricci bound are Ricci-flat on their regular part. Thus, this particular limiting geometry cannot occur in that setting; see also the four-dimensional regularity theory of
\cite{CheegerNaber2015}.

There remains the exceptional value $c=1$, for which
\(
\Sigma(S^3/\mu_4)\cong S^4/\mu_4
\)
is a spherical Einstein orbifold. This limiting geometry is also ruled out in the Einstein setting. Huang--Ozuch \cite{HuangOzuch} prove that every spherical and hyperbolic Einstein four-orbifold with isolated singularities cannot be approximated in the Gromov–Hausdorff sense by smooth Einstein four-manifolds; see also \cite{OzuchII} and \cite{Ozuch2024Integrability} for earlier desingularization obstructions in related settings. Consequently, $S^4/\mu_4$ cannot arise as a nontrivial Gromov--Hausdorff limit of smooth Einstein four-manifolds. 

The preceding discussion shows that an Einstein analogue of the instability construction obtained in \cite{Brena2026} would have to involve a genuinely
different limiting geometry. Here, we show that such instability does occur in the Ricci-flat setting, even within the K\"ahler category. We construct two families of closed
Ricci-flat K\"ahler four-manifolds with non-isomorphic fundamental groups converging in the Gromov--Hausdorff sense to the same compact flat orbifold.

\begin{theorem}\label{thm:main1}
There exist a compact flat four-dimensional Riemannian orbifold $X$, constants $\varepsilon_0,\mathrm v,C>0$, and families of closed four-manifolds
\[
(M,g_\varepsilon),
\qquad
(N,h_\varepsilon),
\qquad
0<\varepsilon<\varepsilon_0,
\]
such that
\[
\Ric_{g_\varepsilon}
=
\Ric_{h_\varepsilon}
=
0,
\qquad
\operatorname{vol}_{g_\varepsilon}(M),
\operatorname{vol}_{h_\varepsilon}(N)\ge \mathrm v,
\qquad
\operatorname{diam}_{g_\varepsilon}(M),
\operatorname{diam}_{h_\varepsilon}(N)\le C,
\]
and
\[
(M,g_\varepsilon)\xrightarrow{GH}X,
\qquad
(N,h_\varepsilon)\xrightarrow{GH}X
\qquad
\text{as }\varepsilon\to0.
\]
Moreover,
\[
\pi_1(M)\cong\mathbb Z_2,
\qquad
\pi_1(N)=1.
\]
$M$ is an Enriques manifold and $N$ is a  $K3$ manifold. Furthermore, both families are K\"ahler and locally hyperk\"ahler.
\end{theorem}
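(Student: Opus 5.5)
Our plan is to realise both families as Kummer-type degenerations of Ricci-flat Kähler metrics onto one flat orbifold. Start from a flat torus $T=\mathbb C^2/\Lambda$ and the involution $\iota(z)=-z$, so $T/\iota$ is the flat Kummer orbifold with sixteen $\mathbb C^2/\mathbb Z_2$ points. Choose $\Lambda$ so that $T$ also admits a second holomorphic isometry $\sigma(z)=(-z_1+a_1,\,z_2+a_2)$, with $a$ a half-period. We require that $\sigma$ commutes with $\iota$ modulo $\Lambda$, that $G=\langle\iota,\sigma\rangle\cong\mathbb Z_2\times\mathbb Z_2$, and that every nontrivial element of $G$ other than $\iota$ acts freely on $T$ and permutes the sixteen fixed points of $\iota$ freely, in eight pairs. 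A product lattice $\Lambda=\Lambda_1\times\Lambda_2$ with $a_1,a_2$ suitable half-periods gives this. Then $\iota\sigma(z)=(z_1-a_1,-z_2-a_2)$, and its fixed points would need $2z_2\equiv -a_2$, which is allowed, and $a_1\equiv 0$, which is not. So $\sigma$ and $\iota\sigma$ are free. The candidate limit is $X=T/G=(T/\iota)/\langle\bar\sigma\rangle$, a compact flat orbifold with eight isolated $\mathbb C^2/\mathbb Z_2$ singularities.

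\textbf{The family $N$.} Fix a second flat torus $T'$ with its Kummer quotient $T'/\iota$. We want $T'/\iota$ to be isometric to $X$, that is, $T'/\pm1\cong T/G$. The cleanest choice is the torus $T'=T/\langle\sigma\iota\rangle$. It is a torus because $\sigma\iota$ is a free translation-type isometry, $(z_1,z_2)\mapsto(z_1-a_1,-z_2-a_2)$. This is not a translation, so we correct the choice: take $\sigma$ so that $\sigma\iota$ is a pure translation by the half-period $a$, i.e. $\sigma(z)=-z+a$. Then $T'=T/\langle z\mapsto z+a\rangle$ is a flat torus, and $X=T'/\pm1$ is a Kummer orbifold with sixteen singular points (the eight pairs collapse). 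The Kummer $K3$ then desingularises $X$ directly. One checks which of $\iota,\sigma$ has fixed points, and the Enriques structure then has to come from a \emph{different} free involution; see the next paragraph.

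\textbf{The family $M$.} The standard Enriques construction takes a Kummer-type $K3$ with a free antiholomorphic-on-$\omega$ involution covering an involution of $X$. Here we realise $M$ as the quotient of a $K3$ desingularising an orbifold $Y$ that double covers $X$, via $Y=T''/\pm1$ and a free involution $\tau$ with $Y/\tau=X$. For the $K3$, we glue Eguchi--Hanson metrics $\varepsilon^2 g_{EH}$ at the sixteen points of $Y$ and perturb, by Kummer gluing (Topiwala, LeBrun--Singer, Donaldson), to a hyperkähler metric $\tilde g_\varepsilon$. We do this $\tau$-equivariantly: choose gluing data ($\tau$ swaps points in pairs, with matching Eguchi--Hanson scales and orientations), and use uniqueness in the Calabi--Yau theorem within the Kähler class. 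Since $\tau$ preserves $g_0$ but sends the holomorphic symplectic form to $-\omega$, we invoke Yau for the $\tau$-invariant Kähler class, and $\tau^*\tilde g_\varepsilon=\tilde g_\varepsilon$ follows by uniqueness. Setting $g_\varepsilon=\tilde g_\varepsilon/\tau$ gives a Ricci-flat Kähler, locally hyperkähler metric on the Enriques surface $M$, with $\pi_1=\mathbb Z_2$. The $N$ family comes from the non-equivariant Kummer desingularisation of $X$ itself, which requires $X$ to be a torus quotient by $\pm1$. So the key design point is to pick $T''$, $\tau$ with $T''/\langle\pm1,\tau\rangle\cong T'/\pm1$ for some torus $T'$, where $\tau$ has no fixed points and $\tau\iota$ is a translation. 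We take $T'=T''/\langle\tau\iota\rangle$, a torus, noting that $\tau\iota$ is a free translation and $\tau$ is free.

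\textbf{Convergence and bounds.} Both metrics are $C^\infty$-close to the flat metric away from $\varepsilon$-balls around the singular points, and inside those balls they are $\varepsilon$-scaled Eguchi--Hanson metrics of diameter $O(\varepsilon)$. Hence $d_{GH}\to0$ to $X$, for $N$ directly and for $M$ after passing to the $\tau$-quotient. The volumes converge to $\operatorname{vol}(X)>0$, the diameters stay bounded, and $\Ric=0$ holds. The main obstacle is the lattice bookkeeping. We must exhibit $T''$ (say $E_1\times E_2$) and $\tau(z)=(-z_1+a_1,\,z_2+a_2)$ so that $\tau$ and $\tau\iota$ are free, while the quotient orbifold $X$ has exactly the structure of $T'/\pm1$; equivalently, the eight pairs of points must give genuinely sixteen singular points of some Kummer orbifold. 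We would check that $\tau\iota(z)=(z_1-a_1,-z_2-a_2)$ is not a translation, so $T''/\tau\iota$ is a bielliptic-type surface. We would therefore instead match $X$ with the quotient of a $K3$ Kummer construction using the fact that $X$ is locally $\mathbb C^2/\mathbb Z_2$ at all singular points. We then desingularise $X$ by Kummer gluing to produce $N$, and confirm that $N$ is simply connected by computing $\pi_1$ of the orbifold minus singular points, with the Eguchi--Hanson necks killing the $\mathbb Z_2$ loops. This $\pi_1$ verification is the step we expect to require the most care.
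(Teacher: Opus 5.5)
There is a genuine gap. The proposal never settles on one orbifold $X$ that both families converge to, and the version you finally adopt fails on the $N$ side.

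Take your last design: $X=T/\langle\iota,\sigma\rangle$ with $\sigma$ and $\iota\sigma$ free, so $X$ has eight $\mathbb C^2/\mathbb Z_2$ points and the Enriques family $\mathrm{Km}(T)/\sigma$ converges to it. Your claim that Eguchi--Hanson gluing at these eight points gives a simply connected $N$ is false. $\pi_1(X^{\mathrm{reg}})$ is the crystallographic group $\Gamma$, an extension of $G$ by $\Lambda$. Gluing simply connected Eguchi--Hanson pieces along the $\mathbb{RP}^3$ necks kills exactly the normal subgroup generated by the elements of $\Gamma$ with fixed points. Since $\sigma$ and $\iota\sigma$ are free, these elements are only the lifts $z\mapsto -z+\lambda$. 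They generate the index-two subgroup $\Lambda\rtimes\langle -1\rangle$, so $\pi_1(N)\cong\mathbb Z_2$. In fact this desingularization is just $\mathrm{Km}(T)/\sigma$ again. The Euler characteristic agrees: $\chi_{\mathrm{top}}(X)=-16/4+8=4$, so the desingularization has $\chi=4-8+8\cdot2=12\neq 24$.

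Your other alternative fails symmetrically. A Kummer orbifold $T'/\pm1$ gives $\chi=8-16+32=24$, so no Enriques family can converge to it. Also, a free involution of a sixteen-point Kummer orbifold $Y$ leaves only eight singular points, so $Y/\tau\cong T'/\pm1$ cannot hold.

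The underlying issue is structural. If every singular point of $X$ is $\mathbb C^2/\mathbb Z_2$, the only hyperk\"ahler ALE filling is Eguchi--Hanson. This is equivalent to $G$ acting freely on the sixteen Kummer points, which is exactly what you imposed. In that case quotienting and resolving commute, and both families have the same topology.

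The missing idea is to let $G$ \emph{fix} some Kummer points. Then $X$ acquires a larger isotropy group that admits two different Ricci-flat ALE fillings. The paper does this as follows.
\begin{itemize}
\item It takes $\mathbb T^4=\mathbb H/\mathbb Z\langle1,i,j,k\rangle$ and $G=\langle L_i\rangle$. Since $L_i^2=-\mathrm{Id}$, this is an involution of $X_0$. It fixes four Kummer points, which become $\mathbb C^2/\mathbb Z_4$ points of $X$.
\item With $J=L_j$, $L_i$ is antiholomorphic. It lifts to the Kummer K3 and acts on each fixed exceptional curve by $[z]\mapsto[Az]$, with $A$ antilinear and $A^2=-1$, hence freely. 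The quotient is an Enriques surface, and each $\mathbb Z_4$ point is filled by a free $\mathbb Z_2$-quotient of Eguchi--Hanson, which has $\chi=1$.
\item With $\widetilde J=R_i$, $L_i$ acts as $\operatorname{diag}(i,-i)\in SU(2)$. Then $X$ has four $A_3$ and six $A_1$ points, and its minimal resolution is a K3.
\end{itemize}
The Euler characteristic count shows both fillings fit the same $X$: $-4+4\cdot1+6\cdot2=12$ for the Enriques side and $-4+4\cdot4+6\cdot2=24$ for the K3 side.
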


\begin{remark}
The preceding theorem also shows that the second Betti number is not locally stable in the Ricci-flat K\"ahler setting. Indeed,
\[
b_2(M)=10,
\qquad
b_2(N)=22,
\]
while the two families converge to the same Gromov--Hausdorff limit.
\end{remark}

Thus \Cref{thm:main1} realizes the same pair of fundamental groups as \cite{Brena2026}, namely
\[
1,\qquad \mathbb Z_2,
\]
but now within the Ricci-flat K\"ahler category. A classical theorem of Hitchin \cite{Hitchin1974} shows that, among compact nonflat locally hyperk\"ahler four-manifolds, the only possible fundamental groups are
\[
1,\qquad
\mathbb Z_2,\qquad
\mathbb Z_2\times\mathbb Z_2,
\]
corresponding respectively to a $K3$ surface, an Enriques surface, and a
$\mathbb Z_2\times\mathbb Z_2$ quotient of $K3$. Using the same techniques, we also construct two locally hyperk\"ahler Ricci-flat four-manifolds with fundamental groups $\mathbb Z_2$ and $\mathbb Z_2\times\mathbb Z_2$ converging to the same compact flat orbifold.

\begin{theorem}\label{thm:main2}
There exist a compact flat four-dimensional Riemannian orbifold $X$, constants $\varepsilon_0,\mathrm v,C>0$, and families of closed Ricci-flat four-manifolds
\[
(P,g_\varepsilon),
\qquad
(M,h_\varepsilon),
\qquad
0<\varepsilon<\varepsilon_0,
\]
such that
\[
\operatorname{vol}_{g_\varepsilon}(P),
\operatorname{vol}_{h_\varepsilon}(M)\ge\mathrm v,
\qquad
\operatorname{diam}_{g_\varepsilon}(P),
\operatorname{diam}_{h_\varepsilon}(M)\le C,
\]
and
\[
(P,g_\varepsilon)
\xrightarrow{GH}X,
\qquad
(M,h_\varepsilon)
\xrightarrow{GH}X
\qquad
\text{as }\varepsilon\to0.
\]
Their fundamental groups are
\[
\begin{array}{lll}
& 
\pi_1(P)\cong\mathbb Z_2\times\mathbb Z_2,
&
\pi_1(M)\cong\mathbb Z_2,
\end{array}
\]
The manifolds are locally hyperk\"ahler.
\end{theorem}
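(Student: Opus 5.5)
The plan is to realize both $P$ and $M$ as free quotients of Kummer-type $K3$ surfaces degenerating to a flat orbifold $T^4/\Gamma$. Start from a flat torus $T^4=\mathbb R^4/\Lambda$ with $\Lambda$ a product lattice, and the Kummer involution $\iota(x)=-x$, which has $16$ fixed points. Choose two commuting translations-composed-with-reflections $\sigma_1,\sigma_2$ of $T^4$. In coordinates $x=(x_1,\dots,x_4)$ with half-lattice vectors $a,b$, a natural choice is
\[
\sigma_1(x)=(-x_1,-x_2,x_3,x_4)+a,
\qquad
\sigma_2(x)=(x_1,-x_2,-x_3,x_4)+b.
\]
They should satisfy four conditions:
\begin{itemize}
\item they commute with $\iota$ modulo $\Lambda$;
\item they preserve a common hyperkähler structure up to sign, being antiholomorphic or holomorphic so that the quotient stays Kähler where required;
\item the group $G=\langle \iota,\sigma_1,\sigma_2\rangle\cong\mathbb Z_2^3$ acts on $T^4$ so that every non-identity element other than $\iota$ acts freely on the set of points with nontrivial stabilizer, and has no fixed points on $T^4/\iota$ away from the $16$ singular points;
\item the induced action on the $16$ fixed points of $\iota$ is free.
\end{itemize}
With these in place, take $X=T^4/G$ and also $X'=T^4/\langle\iota,\sigma_1\rangle$. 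To obtain a common limit, I will instead arrange that the orbifolds coincide. This means choosing a second torus $T'^4$ and groups $G_1$ (order $4$) on $T^4$ and $G_2$ (order $8$) on $T'^4$ with $T^4/G_1\cong T'^4/G_2$ isometrically as flat orbifolds. Equivalently, I pick a crystallographic group $\Gamma$ containing two different lattices $\Lambda\supset\Lambda'$ of index $2$, both normal in $\Gamma$. Then $X=\mathbb R^4/\Gamma=T_\Lambda/(\Gamma/\Lambda)=T_{\Lambda'}/(\Gamma/\Lambda')$. This is exactly the mechanism presumably used for \Cref{thm:main1}, one level up.

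The desingularization step is standard. The singularities of $X$ are eight points of type $\mathbb C^2/\pm1$. Both resolutions replace each of them by an Eguchi--Hanson space glued in at scale $\varepsilon$. On the cover $T^4/\iota$ this is the Kummer construction: the Kummer $K3$ carries a hyperkähler metric obtained by Kronheimer/Topiwala/Donaldson gluing, which converges in GH to $T^4/\iota$ with uniform volume and diameter bounds. The key point is \emph{equivariance}. I choose the Eguchi--Hanson bubbles at the $16$ points compatibly with the free permutation action of $G/\iota$. Then, by uniqueness of the Ricci-flat metric in a Kähler class (Yau), or by the uniqueness of the perturbative solution of the gluing, the resulting Ricci-flat metric is $G$-invariant. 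For elements acting antiholomorphically with respect to $I$, I use a different complex structure $J$ or $K$ of the hyperkähler triple with respect to which they are holomorphic. I also choose the Kähler class $G$-invariant, which is possible since the bubble classes are permuted and the torus class is invariant. Since $G/\iota$ acts freely on $K3$, the quotients are smooth closed Ricci-flat locally hyperkähler manifolds. The metrics $g_\varepsilon$ and $h_\varepsilon$ converge in GH to $X$, because GH convergence descends to quotients by isometric group actions. The bounds $\mathrm{vol}\ge\mathrm v$ and $\mathrm{diam}\le C$ are inherited, with volume equal to $\mathrm{vol}(X)$ in the limit.

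The fundamental group computation follows from Hitchin's theorem. A free quotient of $K3$ by a group $H$ has $\pi_1=H$, since $K3$ is simply connected. This gives $\pi_1(M)\cong\mathbb Z_2$ and $\pi_1(P)\cong\mathbb Z_2\times\mathbb Z_2$, with $H$ ranging over $\Gamma/\Lambda$ modulo $\iota$ and $\Gamma/\Lambda'$ modulo $\iota$.

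The main obstacle is the combinatorial choice of $\Gamma$ and of the two lattices $\Lambda\supset\Lambda'$. Three things must hold simultaneously:
\begin{itemize}
\item every non-translation element of $\Gamma$ with fixed points in $\mathbb R^4$ acts as $-1$, so that the singularities of $X$ are isolated $A_1$ points;
\item the remaining elements act freely on $\mathbb R^4$ and preserve the hyperkähler structure up to signs, so that they are holomorphic isometries for some complex structure in the $2$-sphere, consistent with Hitchin's classification;
\item each quotient group acts freely on the $16$ (respectively $16$ or $32$) nodes of its Kummer surface.
\end{itemize}
I would first try $\Gamma$ generated by $\Lambda'=\mathbb Z^4$, $-1$, and the two glide maps above with $a=(0,0,\tfrac12,0)$ and $b=(\tfrac12,0,0,\tfrac12)$. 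I would take $\Lambda=\Lambda'+\mathbb Z a'$ for a suitable half-vector $a'$ obtained as the square or product of glides. I would then check freeness by direct fixed-point computation. As a consistency check, the Euler characteristics must match: $\chi(M)=12$, $\chi(P)=6$, and $\chi(K3)=24$.
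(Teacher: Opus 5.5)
\textbf{There is a genuine gap.} It lies in the design itself, not in the combinatorial choice of $\Gamma$ that you single out as the main obstacle.

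You require three things: every singular point of $X$ is an $A_1$ point, the groups act freely on the nodes, and both families are built by gluing Eguchi--Hanson spaces at every singular point. Then $M$ and $P$ are both obtained from the same $X$ by cutting out cone neighbourhoods of its singular points and gluing in copies of $T^*S^2$ along $\mathbb{RP}^3$.

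Since $T^*S^2$ is simply connected, and $\mathbb R^4$ minus a discrete set is simply connected, van Kampen gives the same answer for both manifolds:
$\pi_1\cong\Gamma/\langle \gamma\in\Gamma : \gamma \text{ has a fixed point}\rangle$.
This is an invariant of $X$ alone. Similarly, both manifolds have $\chi=\chi_{\mathrm{top}}(X)+8$. So your own consistency check $\chi(M)=12\neq 6=\chi(P)$ already rules the configuration out.

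Concretely, the lattice picture fails as follows. For $v\in\Lambda\setminus\Lambda'$ we have $2v\in\Lambda'$. The element $\iota\tau_v\in\Gamma$ fixes the sixteen points of $T_{\Lambda'}$ with $2x\equiv v \pmod{\Lambda'}$, none of which is a node. Hence the image of $\tau_v$ in $(\Gamma/\Lambda')/\langle\iota\rangle$ has eight fixed points in the regular part of $T_{\Lambda'}/\iota$. It therefore cannot act freely on $\mathrm{Km}(T_{\Lambda'})$. If the order-$4$ group did act freely, $X$ would have $16/4=4$ singular points rather than eight. The natural repair, resolving $T_{\Lambda'}/\langle\iota,\tau_v\rangle=T_\Lambda/\iota$ instead, simply reproduces $M$.

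\textbf{The missing idea.} The two constructions must differ \emph{locally} at some singular point of $X$. This forces an element of $G$ to fix a Kummer point, which is the opposite of your freeness requirement on the nodes. The paper does this as follows.

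The element $L_i$ fixes four Kummer points, and $L_i^2=-\mathrm{Id}$ on $\mathbb H$. So $X=X_0/\langle L_i,D\rangle$ acquires points modelled on $\mathbb H/\langle L_i\rangle\cong\mathbb C^2/\mathbb Z_4$, with link $L(4,1)$. These points admit two topologically different fillings.

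\emph{First filling ($P$).} Take $J=L_u$, which anticommutes with $L_i$. The lift of $L_i$ to the Kummer surface is antiholomorphic. On each fixed exceptional curve it acts by $[z]\mapsto[Az]$ with $A$ antilinear and $A^2=-\mathrm{Id}$, hence freely. The filling of $L(4,1)$ in $P$ is therefore a disc bundle over $\mathbb{RP}^2$, with $\pi_1=\mathbb Z_2$.

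\emph{Second filling ($M$).} Take $\widetilde J=R_i$, which commutes with $L_i$, so $L_i\in SU(2)$. One resolves $Z=X_0/\langle L_i\rangle$, which has four $A_3$ and six $A_1$ points and resolves to a $K3$. Only then does one divide by the free antiholomorphic involution $D$. Here the filling is the simply connected $A_3$ chain.

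Van Kampen kills only the index-two subgroup of $\pi_1(L(4,1))=\mathbb Z_4$ in the first case, and all of it in the second. This is exactly what yields $\mathbb Z_2\times\mathbb Z_2$ versus $\mathbb Z_2$, and $\chi=6$ versus $12$.

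\textbf{What survives.} Your analytic step (equivariant gluing plus uniqueness of the Ricci-flat K\"ahler metric in an invariant class) does carry over, with two changes. Antiholomorphic elements should be handled by comparing $-\gamma^*\omega_\varepsilon$ with $\omega_\varepsilon$ in one fixed complex structure; switching to another complex structure changes the resolution itself. And at the $\mathbb Z_4$ points you need equal-area $A_3$ ALE models, not Eguchi--Hanson spaces.
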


These examples also immediately produce higher-dimensional ones. Indeed, taking products with a fixed flat $m$-torus preserves Ricci-flatness and Gromov--Hausdorff convergence and multiplies the fundamental group by $\mathbb Z^m$.

\begin{corollary}[Higher-dimensional instability]
\label{cor:higher-dimensional}
For every $n=4+m\ge4$, there exist two non-collapsing sequences of closed Ricci-flat $n$-manifolds with uniformly bounded diameter converging to the same $n$-dimensional Gromov--Hausdorff limit with fundamental groups 
\[
\mathbb Z^m \qquad\text{and}\qquad \mathbb Z_2\times\mathbb Z^m.
\]
There also exist such sequences with fundamental groups
\[
\mathbb Z_2\times\mathbb Z^m
\qquad\text{and}\qquad
\mathbb Z_2\times\mathbb Z_2\times\mathbb Z^m.
\]
\end{corollary}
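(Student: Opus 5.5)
The plan is to take products with a fixed flat torus. Let $T^m=\mathbb R^m/\mathbb Z^m$ carry its standard flat metric $g_{T}$. For $m=0$ there is nothing to prove beyond \Cref{thm:main1} and \Cref{thm:main2}, so assume $m\ge1$.

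For the first claim, take the families $(M,g_\varepsilon)$ and $(N,h_\varepsilon)$ from \Cref{thm:main1}, and for a sequence $\varepsilon_i\to0$ set
\[
M_i=(M\times T^m,\ g_{\varepsilon_i}\oplus g_T),\qquad N_i=(N\times T^m,\ h_{\varepsilon_i}\oplus g_T).
\]
The second claim is identical, using $(P,g_\varepsilon)$ and $(M,h_\varepsilon)$ from \Cref{thm:main2}.

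The required properties are then verified one at a time.
\begin{enumerate}[label=(\roman*)]
\item \emph{Curvature.} The Ricci tensor of a Riemannian product is the direct sum of the Ricci tensors of the factors. Hence $\Ric=0\oplus0=0$.
\item \emph{Volume and diameter.} We have $\operatorname{vol}(M_i)=\operatorname{vol}(M,g_{\varepsilon_i})\cdot\operatorname{vol}(T^m)\ge \mathrm v$. For the product metric,
\[
\operatorname{diam}(M_i)=\sqrt{\operatorname{diam}(M)^2+\operatorname{diam}(T^m)^2}\le\sqrt{C^2+m/4}.
\]
The same bounds hold for $N_i$.
\item \emph{Convergence.} For compact metric spaces with the $\ell^2$ product metric,
\[
d_{GH}(A\times Z,\,B\times Z)\le d_{GH}(A,B).
\]
To see this, take a correspondence $\mathcal R\subset A\times B$ with distortion $\delta$. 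Then $\{((a,z),(b,z)):(a,b)\in\mathcal R\}$ is a correspondence whose distortion is at most $\delta$, since $\bigl|\sqrt{s^2+t^2}-\sqrt{s'^2+t^2}\bigr|\le|s-s'|$. Therefore both $M_i$ and $N_i$ converge to $X\times T^m$, which is a compact flat $n$-dimensional orbifold. The convergence is non-collapsing by the volume bound in (ii).
\item \emph{Fundamental groups.} Since $\pi_1$ of a product is the product of the fundamental groups,
\[
\pi_1(N\times T^m)\cong\mathbb Z^m,\qquad \pi_1(M\times T^m)\cong\mathbb Z_2\times\mathbb Z^m,
\]
and similarly $\pi_1(P\times T^m)\cong\mathbb Z_2\times\mathbb Z_2\times\mathbb Z^m$.
\end{enumerate}

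Finally, these groups are pairwise non-isomorphic. Their torsion subgroups are $1$, $\mathbb Z_2$ and $\mathbb Z_2^2$ respectively, and these are not isomorphic to one another.

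No step here is a real obstacle. The only point needing care is the Gromov--Hausdorff product estimate in (iii), which is elementary and is handled by the correspondence argument above.
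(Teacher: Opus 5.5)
Your proposal is correct and follows the paper's own argument: the paper justifies the corollary in one sentence by taking products with a fixed flat $m$-torus, noting that this preserves Ricci-flatness and Gromov--Hausdorff convergence and multiplies the fundamental group by $\mathbb Z^m$. Your correspondence estimate for $d_{GH}(A\times Z,B\times Z)$, the explicit volume and diameter bounds, and the torsion-subgroup check supply the routine details the paper leaves implicit.
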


Several natural questions remain. First, can fundamental-group instability occur among closed Einstein four-manifolds with positive Einstein constant? Although the isolation theorem of Huang--Ozuch \cite{HuangOzuch} rules out the spherical degeneration discussed above, the question remains open for other possible limiting geometries.

The higher-dimensional examples above arise by taking products of the four-dimensional constructions with flat tori. It would be interesting to determine whether, in dimensions greater than four, Ricci-flat fundamental-group instability can occur by genuinely higher-dimensional mechanisms.

\subsection{Structure of the article}

The article is organized as follows. In \Cref{sec:method} we isolate the quotient--desingularization mechanism underlying the examples and describe the analytic methods used to produce the Ricci-flat metrics. The following section proves \Cref{thm:main1}, giving the pair $(1,\mathbb Z_2)$. Finally, \Cref{sec:thm2-first-construction} establishes \Cref{thm:main2}.

\section*{Acknowledgments} 
The authors would like to thank Aaron Naber for helpful comments during the early stages of this project.

\section{The instability mechanism }\label{sec:method}

The purpose of this section is to describe the mechanism underlying the fundamental-group instability in our examples and to fix notation for the remainder of the article.

\subsection{The Kummer orbifold and the quotient mechanism}

The basic mechanism starts from a Riemannian orbifold $X_0$ together with a finite group $G$ acting freely on its regular part, while some nontrivial elements of $G$ fix some singular points. The starting point in both of our constructions is the Kummer orbifold, which we now fix once and for all.

Let
\[
 \mathbb T^4:=\mathbb H/\Lambda,
\qquad
\Lambda=\mathbb Z\langle 1,i,j,k\rangle,
\]
and consider
\begin{equation}
X_0:=\mathbb T^4/\{\pm1\}
   =(\mathbb H/\Lambda)/\{\pm\mathrm{Id}\}.
\label{eq:KummerOrbifold}
\end{equation}
We equip $X_0$ with the flat orbifold metric $g_0$ induced by the Euclidean metric on $\mathbb H$. Its singular set, which we denote by $\mathcal S(X_0)$, consists of the sixteen $2$-torsion points of $\mathbb T^4$,
\[
\mathcal S(X_0)
=
\tfrac12\Lambda/\Lambda,
\]
each locally modeled on
\[
\mathbb H/\{\pm\mathrm{Id}\}
\cong
\mathbb R^4/\{\pm\mathrm{Id}\}.
\]

We then consider a finite group $G$ of isometries of $X_0$ which acts freely on the regular part of $X_0$, but may have nontrivial stabilizers on $\mathcal S(X_0)$, and form the quotient
\[
X:=X_0/G.
\]
The precise group action will vary between the examples and will be specified in the corresponding construction. The important point is that singular points of $X_0$ with nontrivial $G$-stabilizer acquire a larger local isotropy group after passing to $X$, whereas singular points lying in free $G$-orbits retain the original Kummer local model.

The existence of fixed points is the reason behind the main feature of the constructions: taking a finite quotient and resolving orbifold singularities need not commute, and therefore may provide two different manifolds.

\subsection{The first sequence}

We first describe the construction in which one resolves $X_0$ before taking the quotient. We first construct a covering space of the first manifold by desingularizing $X_0$ with respect to a suitable complex structure.

More precisely, let $J$ be a linear complex structure on $\mathbb H\simeq\mathbb R^4$ which is orthogonal with respect to the chosen flat metric. The flat orbifold metric on $X_0$ is then K\"ahler with respect to $J$.

Fix $t>0$ sufficiently small such that the orbifold balls $B_t(p)$, $p\in\mathcal S(X_0)$, are pairwise disjoint. For each $p\in\mathcal S(X_0)$, choose $J$-holomorphic coordinates giving a biholomorphism
\[
\Psi_p:
B_t(p)
\longrightarrow
B_t(0)\subset\mathbb C^2/\{\pm\mathrm{Id}\}.
\]
Let
\[
\varpi:T^*\mathbb CP^1
\longrightarrow
\mathbb C^2/\{\pm\mathrm{Id}\}
\]
denote the minimal resolution of the $A_1$ singularity. For each
$p\in\mathcal S(X_0)$, set
\[
U(p,t):=\varpi^{-1}(B_t(0)).
\]
Composing with the local coordinates gives
\begin{equation}
\varpi_{p,J}:=\Psi_p^{-1}\circ\varpi:
U(p,t)\longrightarrow B_t(p).
\label{eq:localresol}
\end{equation}
The complex desingularization is obtained by gluing each $U(p,t)$ to
$X_0\setminus\mathcal S(X_0)$ via $\varpi_{p,J}$:
\begin{equation}
X_0^J
:=
\bigl(X_0\setminus\mathcal S(X_0)\bigr)
\mathbin{
  \mathop{\bigsqcup}\limits_{\varpi_{p,J}}
}
U(p,t),
\label{eq:resol}
\end{equation}
where $p$ ranges over $\mathcal S(X_0)$. Thus each
$y\in U(p,t)\setminus E_p$ is identified with
$\varpi_{p,J}(y)\in X_0\setminus\mathcal S(X_0)$. The complex structure $J$ on the regular part of $X_0$ and the standard complex structures on the
resolution pieces are compatible under these identifications and therefore define a global complex structure on $X_0^J$, which we continue to denote
by $J$. The resulting resolution map
\[
\pi_J:X_0^J\longrightarrow X_0
\]
is holomorphic and biholomorphic away from the exceptional divisor.

In the present setting, $X_0^J$ is the classical Kummer surface and hence a $K3$ surface. In each example, the group action and the complex structure $J$ will be chosen so that the $G$-action on $X_0$ lifts to an action
\[
G\curvearrowright X_0^J
\]
which is free.

The extension is easy to describe at the level of the resolution pieces. If an element of $G$ exchanges two singular points of $X_0$, its lift exchanges the corresponding resolution pieces. On the other hand, if a singular point is stabilized, the corresponding resolution piece is preserved and the induced action on its exceptional divisor must be examined. The complex structures in the examples below are chosen precisely so that the resulting action on $X_0^J$ is free. We then define
\[
M:=X_0^J/G.
\]
The metric is produced by the classical Kummer desingularization procedure. One interpolates the flat orbifold metric with suitably rescaled Eguchi--Hanson metrics \cite{EguchiHanson1978} near the exceptional curves. The resulting approximate metric is then perturbed to an exact Ricci-flat metric, and the estimates controlling the perturbation imply that the exact metric remains Gromov--Hausdorff close to $X_0$.

This construction goes back to Page \cite{Page1978K3} and Gibbons--Pope \cite{GibbonsPope1978}, and was rigorously realized in the 1990s by Kobayashi \cite{Kobayashi1990Moduli} and LeBrun--Singer \cite{LeBrunSinger}; see also the interpretation of Kobayashi--Todorov \cite{KobayashiTodorov1987} in terms of the period map. Further gluing constructions were given by Arezzo--Pacard \cite{ArezzoPacard2006}, Donaldson \cite{Donaldson2012}, and Jiang \cite{Jiang2025Kummer}.

For the purposes of the present argument, the gluing is carried out compatibly with the lifted $G$-action. Therefore it descends to a Ricci-flat metric on $M=X_0^J/G$. The resulting quotient
is Gromov--Hausdorff close to
\[
X=X_0/G.
\]

\subsection{The second sequence}

For the second family one reverses the order of the two operations. The central orbifold for both constructions,
\[
X=X_0/G,
\]
will be an appropriate quotient of $X_0$ endowed with its flat Riemannian metric.

The singular points of the quotient arise in two different ways. Singular points of $X_0$ lying in free $G$-orbits are identified along their orbits
and retain the local Kummer model
\[
\mathbb H/\{\pm\mathrm{Id}\}.
\]
On the other hand, if a singular point of $X_0$ has nontrivial stabilizer, then its image in the quotient has a larger local isotropy group.

In the examples considered here, the action on $X_0$ is induced by a finite group of isometries acting on the torus $\mathbb T^4=\mathbb H/\Lambda$.
If $\widetilde G$ denotes the corresponding group upstairs, then $\{\pm\mathrm{Id}\}\subset\widetilde G$ and there is an exact sequence
\[
1\longrightarrow\{\pm\mathrm{Id}\}
\longrightarrow\widetilde G
\longrightarrow G
\longrightarrow1.
\]
More generally, the local group at the image of a singular point is obtained from the subgroup of $\widetilde G$ stabilizing a lift of that point.

We then choose a second orthogonal linear complex structure $\widetilde J$ on $\mathbb H$ for which the local orbifold groups act through finite subgroups of $\mathrm{SU}(2,\widetilde J)$. We do so in such a way that $\widetilde J$ descends to the quotient orbifold and its singularities are
of ADE type.

Taking the corresponding minimal complex resolution at every singular point produces a smooth complex surface $X^{\widetilde J}$ together with a holomorphic map
\[
\pi_{\widetilde J}:X^{\widetilde J}\longrightarrow X,
\]
which is biholomorphic away from the exceptional divisor. Note that, unlike the previous case, there will be $A_3$ singularities. Locally the map $\pi_{\widetilde{J}}$ resolves these by adding a chain of three $(-2)$-curves. In the direct
version of the construction we set
\[
N:=X^{\widetilde J}.
\]

Following \cite{Hattori2026Harmonic}, we then equip $X^{\widetilde J}$ with a K\"ahler metric obtained by interpolating the flat orbifold metric with suitably rescaled ALE Ricci-flat K\"ahler metrics on the minimal resolutions of its ADE singularities. The existence and asymptotic properties of these ALE models follow from Kronheimer \cite{Kronheimer1989ALE}; see also  \cite[Theorem~8.2.3]{Joyce2000} and \cite[Theorem~3.3]{Joyce2001ALE}. Carrying out the gluing at sufficiently small scale yields a K\"ahler metric which is Gromov--Hausdorff close to the original flat orbifold. The final step is to perturb this approximate metric to an exact Ricci-flat metric.

In the proof of \Cref{thm:main2}, the second family is instead constructed through an intermediate orbifold rather than by resolving $X=X_0/G$ directly. The underlying mechanism is the same: quotienting first changes the local isotropy groups, and one then resolves the resulting ADE singularities before taking any remaining free quotient.

\subsection{The final diagram and the noncommutative behavior}

Using the estimates in the perturbation step of the two constructions above, the resulting Ricci-flat manifolds are Gromov--Hausdorff close to the same quotient orbifold. In the direct construction used for \Cref{thm:main1}, this is summarized by the diagram in \Cref{fig:genscheme}. For \Cref{thm:main2}, the corresponding construction involves an intermediate orbifold before the resolution step. The corresponding diagram is given in \Cref{sec:thm2-first-construction}; see \Cref{fig:diagram2}.
\begin{figure}[htbp]
\centering
\begin{tikzpicture}

  \node (T4)  at (0,3.2) {$\mathbb H/\Lambda$};
  \node [anchor=east] at (-0.4,3.3) {$\widetilde G\curvearrowright$};

  \node (X0J) at (-2.8,1.5) {$X_0^J$};
  \node (X0)  at (0,1.5) {$X_0$};

  \node (M)   at (-2.8,-0.5) {$M$};
  \node (X)   at (0,-0.5) {$X$};
  \node (XtJ) at (3.0,-0.5) {$X^{\widetilde J}=N$};

  \draw[->]
    (T4) -- node[right] {$/\{\pm\mathrm{Id}\}$} (X0);

  \draw[->]
    (X0J) --
    node[above] {$\pi_J$}
    node[below, font=\scriptsize] {$G$-equivariant}
    (X0);

  \draw[->]
    (X0J) -- node[left] {$/G$ {\scriptsize (free)}} (M);
    
  \draw[->]
  (M) -- (X);
    
  \draw[->]
    (X0) -- node[right] {$/G$} (X);

  \draw[->]
    (XtJ) -- node[above] {$\pi_{\widetilde J}$} (X);

  \node at (-1.4,-0.9) {$\stackrel{\varepsilon\text{-}\mathrm{GH}}{\approx}$};
  \node at (1.5,-0.9) {$\stackrel{\varepsilon\text{-}\mathrm{GH}}{\approx}$};

\end{tikzpicture}
\caption {The two constructions leading to the same quotient orbifold $X$. On the left, one first resolves $X_0$ and then takes the free quotient by $G$; on the right, one first takes the quotient and then resolves. The $\varepsilon$--GH labels indicate the corresponding Gromov--Hausdorff approximations.}\label{fig:genscheme}
\end{figure}

In both constructions, the noncommutativity between taking the quotient and resolving the singularities comes from the presence of nontrivial stabilizers at singular points of $X_0$. Passing to the quotient changes the corresponding local isotropy groups before the resolution is performed. Consequently, the local resolutions appearing in the second sequence need not agree with the quotients of the $A_1$ resolutions appearing in the first sequence. Globally, this may lead to smooth manifolds with different fundamental groups even though both families converge to the same orbifold $X$. This is precisely what distinguishes the present constructions from more common settings, in which the relevant action is free and holomorphic. In that case, taking the quotient and passing to the minimal resolution commute.

We emphasize that the mechanism described above admits some flexibility. In more general constructions one may vary the lattice
$\Lambda$, the orbifold models, and the group action $G\curvearrowright X_0$. In the present work, however, our goal is to
produce Ricci-flat manifolds by perturbing approximate K\"ahler metrics. This imposes additional constraints on the construction. In particular,
the singularities must be compatible with the chosen complex structures and admit suitable complex resolutions, so that the desingularized spaces remain
within the K\"ahler framework required for the Calabi--Yau perturbation. If one were interested in examples satisfying only uniform two-sided Ricci curvature bounds, these complex-geometric restrictions could be relaxed considerably. For Einstein metrics outside the K\"ahler category, the complex-geometric restrictions disappear, but are replaced by different desingularization constraints.

For the Calabi--Yau desingularization we follow the presentation in \cite{Hattori2026Harmonic}, which adapts the arguments in \cite{Jiang2025Kummer} to orbifold singularities modeled on $\mathbb C^2/\Gamma$, with $\Gamma<\mathrm{SU}(2)$ finite, thereby including all ADE singularities.


\section{Proof of \Cref{thm:main1}}\label{sec:provingthm1}

In this section we prove \Cref{thm:main1} following the strategy introduced in \Cref{sec:method}. We first specify the action on the Kummer orbifold and describe the resulting central quotient. In the following subsections we carry out the two families of metrics. 

\subsection{The action and the central orbifold}

We consider the action
\[
G=\mathbb Z_2\curvearrowright X_0
\]
induced by left quaternionic multiplication by $i$ on $\mathbb{H}$,
\[
L_i(q)=iq.
\]
Indeed,
\begin{equation}\label{eq:Z_2involution1}
L_i\circ(-\mathrm{Id})=(-\mathrm{Id})\circ L_i, \qquad L_i\Lambda=\Lambda, \qquad L_i^2=-\mathrm{Id},
\end{equation}
so $L_i$ descends to an involution of $X_0$ and hence defines a $\mathbb Z_2$-action.

The action is free on the regular part of $X_0$ and preserves $\mathcal S(X_0)$. More precisely, it fixes exactly four of the sixteen Kummer points and permutes the remaining twelve in pairs. The fixed points are
\begin{equation}
\operatorname{Fix}_{X_0}(G)
=
\left\{
[0],\,
\left[\frac{1+i}{2}\right],\,
\left[\frac{j+k}{2}\right],\,
\left[\frac{1+i+j+k}{2}\right]
\right\}.
\label{eq:fixpointsLi}
\end{equation}

\noindent We now introduce the central orbifold of the construction,
\[
X:=X_0/G.
\]
The four singular points of $X_0$ fixed by $G$ descend to four singular points of $X$ with isotropy group $\mathbb Z_4=\langle L_i\rangle$. The remaining twelve singular points form six $G$-orbits and give six singular points with local orbifold group \(\{\pm\mathrm{Id}\}\cong\mathbb Z_2\). Thus, $X$ has four singularities modeled on $\mathbb H/\langle L_i\rangle$ and six modeled on $\mathbb H/\{\pm\mathrm{Id}\}$. 

We equip $X$ with the flat orbifold metric induced by the Euclidean metric on $\mathbb H$.

\subsection{The sequence $(M,g_\varepsilon)$}

Here we carry out the construction of the family of manifolds $(M,g_\varepsilon)$ appearing in \Cref{thm:main1}. Recall that we first construct the covering space, which will be $X_0^J$ for a suitable choice of complex structure $J$, and then obtain $M$ as its quotient; see \Cref{fig:genscheme}.

We say that $J$ is an \emph{orthogonal linear complex structure} on $X_0$ if it is induced by a linear map
\( J:\mathbb H\longrightarrow\mathbb H \) satisfying \( J^2=-\mathrm{Id}\) and orthogonal with respect to the Euclidean metric. The flat orbifold metric on $X_0$ is then K\"ahler with respect to $J$.

\begin{lemma}\label{lemma:example1_interm}
There exist an orthogonal linear complex structure $J$ on $X_0$, a complex resolution
\[
\pi_J:X_0^J\longrightarrow X_0,
\]
and constants $\varepsilon_0,C>0$ with the following properties.

\begin{enumerate}[topsep=0pt,label=\arabic*.]
    \item  $X_0^J$ is a Kummer surface and, in particular, a $K3$ surface. Hence $X_0^J$ is simply connected.

    \item The $\mathbb Z_2$-action on $X_0$ is antiholomorphic with respect to $J$ and extends to a free antiholomorphic action on $X_0^J$.

    \item For every $0<\varepsilon<\varepsilon_0$, $X_0^J$ admits a $\mathbb Z_2$-invariant Ricci-flat K\"ahler metric
    $g_\varepsilon$.

    \item  For every $0<\varepsilon<\varepsilon_0$, the resolution map
    \[
    \pi_J:(X_0^J,g_\varepsilon)\longrightarrow(X_0,g_0)
    \]
    is an equivariant $C\varepsilon^{1/2}$-Gromov--Hausdorff approximation.
\end{enumerate}
\end{lemma}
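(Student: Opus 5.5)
The plan is to take $J:=L_j$, left quaternionic multiplication by $j$, which is orthogonal and satisfies $J^2=-\mathrm{Id}$. Since $L_iL_j=L_k=-L_jL_i$, the map $L_i$ anticommutes with $J$. Hence the involution of $X_0$ induced by $L_i$ is antiholomorphic. Because $J$ is linear and commutes with $-\mathrm{Id}$, it descends to the regular part of $X_0$ and makes every Kummer point a $J$-holomorphic $A_1$ singularity $\mathbb C^2/\{\pm\mathrm{Id}\}$. Resolving all sixteen points as in \eqref{eq:resol} gives the Kummer surface $X_0^J$. That $X_0^J$ is a $K3$ surface, and hence simply connected, is classical for any complex torus; this is item 1.

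For item 2, the action on $X_0^J\setminus\bigcup_p E_p$ is the action on the regular part of $X_0$, so it is free there. At the twelve non-fixed Kummer points, the lift simply exchanges the paired resolution pieces $U(p,t)$, and there is nothing further to check. At the four fixed points \eqref{eq:fixpointsLi}, I will work in the local model: after translating, $L_i$ acts linearly on $(\mathbb H,J)$ as an antilinear map $\sigma$ with $\sigma^2=-\mathrm{Id}$.

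\begin{itemize}
\item An antilinear map sends $J$-complex lines to $J$-complex lines. So $\sigma$ lifts to the blow-up of $\mathbb C^2$, and therefore to $T^*\mathbb CP^1$ as an antiholomorphic map commuting with $\varpi$.
\item On the exceptional curve $E_p\cong\mathbb P(\mathbb C^2)$ this action has no fixed points. If $\sigma(\ell)=\ell$ for a complex line $\ell$, then $\sigma|_\ell$ would be an antilinear map of $\mathbb C$ of the form $z\mapsto a\bar z$. Its square is $|a|^2\,\mathrm{Id}$, which cannot equal $-\mathrm{Id}$.
\item The induced map on $E_p$ is thus the antipodal map of the quaternionic structure, and the lifted action on $X_0^J$ is free and antiholomorphic.
\end{itemize}

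For items 3 and 4, I will follow the Kummer gluing as presented in \cite{Hattori2026Harmonic} and \cite{Jiang2025Kummer}. Glue rescaled Eguchi--Hanson metrics of scale $\varepsilon$ at all sixteen points, using the same parameter at each point. Take the Eguchi--Hanson metric on $T^*\mathbb CP^1$ written in the coordinates induced by the linear identification $\Psi_p$, so that the local antiholomorphic lift preserves it. This holds because the Eguchi--Hanson metric is $\mathrm{U}(2)$-invariant and invariant under complex conjugation composed with an element of $\mathrm{U}(2)$. Consequently the approximate Kähler metric $\tilde g_\varepsilon$, including its cutoffs, can be chosen $\mathbb Z_2$-invariant.

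The key step, and the one I expect to be the main obstacle, is to make sure the exact Ricci-flat perturbation stays invariant. For an antiholomorphic $\sigma$, the form $-\sigma^*\omega$ is again $J$-Kähler, and its metric is $\sigma^*g$. So I will check that the cohomology class $[\omega_\varepsilon]$ satisfies $-\sigma^*[\omega_\varepsilon]=[\omega_\varepsilon]$, which follows from invariance of $\tilde g_\varepsilon$. I will also check that the normalized holomorphic volume form behaves compatibly. Uniqueness of the Ricci-flat Kähler metric in a Kähler class (Yau) then forces the Calabi--Yau metric $g_\varepsilon$ in $[\tilde\omega_\varepsilon]$ to satisfy $\sigma^*g_\varepsilon=g_\varepsilon$. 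Alternatively, one can solve the Monge--Ampère equation in the space of $\sigma$-invariant potentials and use uniqueness of the solution.

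Finally, the perturbation estimates of \cite{Hattori2026Harmonic} give $g_\varepsilon$ close to the flat metric in $C^0$ outside the $\varepsilon^{1/2}$-neighborhoods of the exceptional curves. They also give that these neighborhoods, together with $E_p$, have diameter $O(\varepsilon^{1/2})$. From these two facts, $\pi_J$ is a $C\varepsilon^{1/2}$-Gromov--Hausdorff approximation. It is equivariant by construction, which is item 4.
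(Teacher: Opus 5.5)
Your proposal is correct and follows essentially the paper's route. It makes the same choice $J=L_j$ and uses the same blow-up model, in which $L_i$ acts by an antilinear $\sigma$ with $\sigma^2=-\mathrm{Id}$, so no $J$-complex line is fixed on $E_p$. It also relies on the same Hattori gluing and estimates for the $C\varepsilon^{1/2}$-Gromov--Hausdorff approximation, and on the same uniqueness of the Ricci-flat K\"ahler form in a K\"ahler class.

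The only real difference is how you obtain $[-\widetilde L_i^*\widehat\omega_\varepsilon]=[\widehat\omega_\varepsilon]$. You get it from pointwise anti-invariance of the glued form, using the radial Eguchi--Hanson potential, radial cutoffs and the same scale at all sixteen points. The paper checks it only in cohomology, by pairing with the six toral and sixteen exceptional classes and using equal exceptional areas together with orientation reversal. That cohomological argument needs only area data, and it is the version the paper reuses later for the $A_3$ resolutions.
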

\begin{proof} \mbox{} \\
\noindent\textit{Complex structure and topology:} Recall that our action is given by left quaternionic multiplication by $i$, namely $L_i$. Choose $J := L_j$, the orthogonal linear complex structure on $\mathbb H$ given by quaternionic multiplication by $j$. Note that the complex structure descends to $X_0$. Moreover, $L_i L_j = -L_j L_i$ and therefore the $\mathbb{Z}_2$ action is $J$-antiholomorphic with respect to this complex structure. Let $X_0^J$ be the minimal complex resolution of $X_0$, as in \eqref{eq:resol}. The surface $X_0^J$ is the classical Kummer surface, hence a K3 surface and, in particular, is simply connected; see, for instance, \cite[Theorem~2.14]{Hattori2026Harmonic}.

\noindent\textit{Extending the action:}
For each $p\in\mathcal S(X_0)$, set $p'=L_i(p)$. Since $L_i$ is a $J$-antiholomorphic isometry, it restricts to an antiholomorphic isomorphism $L_i:B_t(p)\longrightarrow B_t(p')$. By uniqueness of the minimal resolution, applied to the conjugate complex structures, this map admits a unique antiholomorphic lift $\widetilde L_{i,p}:U_p\longrightarrow U_{p'}$, namely, it satisfies $\varpi_{p',J}\circ\widetilde L_{i,p} = L_i\circ\varpi_{p,J}$. In particular, $\widetilde L_{i,p}(E_p)=E_{p'}$, and on $U_p\setminus E_p$ the lift agrees with $\varpi_{p',J}^{-1}\circ L_i\circ\varpi_{p,J}$. These local lifts are therefore compatible with the gluing identifications in \eqref{eq:resol} and, together with the original action on $X_0\setminus\mathcal S(X_0)$, define a global antiholomorphic automorphism
\[
\widetilde L_i:X_0^J\longrightarrow X_0^J,
\qquad
\pi_J\circ\widetilde L_i=L_i\circ\pi_J.
\]
Since the induced map $L_i$ on $X_0$ is an involution, uniqueness of the
lifts implies that $\widetilde L_i^2=\mathrm{Id}$. Thus the original
$\mathbb Z_2$-action extends to $X_0^J$. 

\medskip
\noindent\textit{Freeness of the extended action:}
The extended action is free away from the exceptional divisor, since there it is conjugate to the original action on the regular part of $X_0$. If $L_i(p)\neq p$, then the two exceptional curves $E_p$ and $E_{L_i(p)}$ are exchanged, so neither contains a fixed point. It therefore remains to examine $E_p$ when $L_i(p)=p$.

Write \(\mathbb C_j=\mathbb R\oplus j\mathbb R\) and identify $(\mathbb H,J)$ with $\mathbb C_j^2$ by using $J-$holomorphic coordinates given by 
\(
q=z_1+z_2i,\) where \(z_1,z_2\in\mathbb C_j.
\)
Since
\(
iz=\overline z\,i\) for \(z\in\mathbb C_j,
\)
left multiplication by $i$, $L_i$, is represented in these coordinates by the antilinear map
\[
A(z_1,z_2)=(-\overline z_2,\overline z_1),
\qquad
A^2=-\mathrm{Id}.
\]
Translating these coordinates to $p$ gives a $J$-holomorphic orbifold chart $\Psi_p:B_r(p)\longrightarrow B_r(0)\subset \mathbb C_j^2/\{\pm \mathrm{Id}\}$, in which the local action of $L_i$ is induced by $A$.  Recall that, in the notation of \Cref{sec:method}, $\Psi_p$ denotes the local $J$-holomorphic orbifold chart used to define the resolution near $p$.

To compute the action of the lift on $E_p$, we use the following model for the minimal resolution of $\mathbb C^2/\{\pm\mathrm{Id}\}$:
\[
Y:=\operatorname{Bl}_0(\mathbb C^2)/\langle\tau\rangle,
\qquad
\tau(z,\ell)=(-z,\ell),
\]
where
\[
\operatorname{Bl}_0(\mathbb C^2)
=
\{(z,\ell)\in\mathbb C^2\times\mathbb{CP}^1:z\in\ell\}.
\]
The map
\[
(z,\ell)\longmapsto(Az,A\ell)
\]
is antiholomorphic, commutes with $\tau$, and extends the antiholomorphic automorphism of $\mathbb C^2/\{\pm\mathrm{Id}\}$ induced by $A$. It therefore descends to $Y$ and, after identifying the corresponding local resolution piece with $U_p$, agrees with the unique lift constructed above.

The exceptional curve consists of the classes of $(0,\ell)$, and $\tau$ acts trivially on these points. Hence the restriction of $\widetilde L_{i,p}$ to $E_p\simeq\mathbb{CP}^1$ is given by
\[
[z]\longmapsto[Az].
\]
If $[z]$ were fixed, then $Az=\lambda z$ for some
$\lambda\in\mathbb C^\times$. Since $A$ is antilinear,
\[
-z=A^2z=A(\lambda z)
=\overline\lambda Az
=|\lambda|^2z,
\]
which is impossible. Thus the extended action has no fixed points on $E_p$. Together with the preceding cases, this proves that $\widetilde L_i$ defines a free antiholomorphic $\mathbb Z_2$-action on $X_0^J$. 

\noindent \textit{Ricci-flat Kähler metrics:} We shall use the construction of the K\"ahler metrics given in \cite{Hattori2026Harmonic}.\footnote{Although we work with the complex structure $J=L_j$, this does not affect the construction given in \cite{Hattori2026Harmonic}. In fact, the flat complex orbifold $(X_0,L_j)$ is biholomorphically isometric to the standard Kummer orbifold, and hence we may apply \cite[Sections~2.2--2.4]{Hattori2026Harmonic} after this identification.} For each $p\in\mathcal S(X_0)$, let $\eta_p$ denote the Eguchi--Hanson K\"ahler form on $T^*\mathbb{CP}^1$, and let $\varphi_p$ be its K\"ahler potential near infinity, so that $\eta_p=\sqrt{-1}\,\partial\bar\partial\varphi_p$. By \cite[Proposition~2.10 and Remark~2.12]{Hattori2026Harmonic}, $\eta_p$ belongs to a one-parameter family of K\"ahler forms, which we parametrize here by $\varepsilon>0$ and denote by $\eta_{p,\varepsilon^2}$. Moreover,
\begin{equation}
    \eta_{p,\varepsilon^2}
    =
    \varepsilon^2 H_{\varepsilon^{-1}}^*\eta_p, \label{eq:etap}
\end{equation}
where $H_\lambda$ is a holomorphic $\mathbb C^*$-action satisfying
$\varpi_p\circ H_\lambda=\lambda\varpi_p$. In particular, $\varepsilon$ is the corresponding linear scale of the Eguchi--Hanson geometry: if $E_p$ denotes the exceptional sphere, then $\operatorname{diam}_{\eta_{p,\varepsilon^2}}(E_p) = \varepsilon\,\operatorname{diam}_{\eta_p}(E_p)$.  

A corresponding family of approximate K\"ahler forms on $X_0^J$ is constructed by gluing the K\"ahler potentials of $\eta_{p,\varepsilon^2}$ to the flat potential. More precisely, on the ALE end, and in fact on $T^*\mathbb{CP}^1\setminus\mathbb{CP}^1$, one may take $\varphi_{p,\varepsilon^2}:=\varepsilon^2 H_{\varepsilon^{-1}}^*\varphi_p$ as a K\"ahler potential for $\eta_{p,\varepsilon^2}$. The approximate form $\widehat\omega_\varepsilon$ is then defined to agree with the corresponding Eguchi--Hanson form on $U(p,\varepsilon^{1/2})$ and with the flat form $\pi_J^*\omega_0$ outside $U(p,2\varepsilon^{1/2})$. On the transition annulus $A(p,\varepsilon^{1/2}):=U(p,2\varepsilon^{1/2})\setminus\overline{U(p,\varepsilon^{1/2})}$, the two are interpolated at the level of K\"ahler potentials, namely
\begin{equation}
\widehat\omega_\varepsilon
=
\sqrt{-1}\,\partial\bar\partial
\left(
\chi_{\varepsilon^{1/2}}\,
\varphi_{p,\varepsilon^2}
+
(1-\chi_{\varepsilon^{1/2}})
\frac{\pi_J^*r_p^2}{2}
\right).\label{eq:gluingdata}
\end{equation}
Thus, for every sufficiently small $\varepsilon>0$, $\widehat\omega_\varepsilon$ defines a K\"ahler form on $X_0^J$ which agrees with the corresponding Eguchi--Hanson K\"ahler form on $U(p,\varepsilon^{1/2})$ and with the flat K\"ahler form outside $U(p,2\varepsilon^{1/2})$, with the two forms interpolated on the transition annulus. We denote by $\widehat g_\varepsilon$ its associated K\"ahler metric. Below, we also use the weighted H\"older norms of \cite[Subsection~3.1]{Hattori2026Harmonic}, with respect to $\widehat g_\varepsilon$ and the scale function
\[
\sigma_\varepsilon(x)
:=
\max\left\{
\varepsilon,
\min\left\{
1,
d_{g_0}\bigl(\pi_J(x),\mathcal S(X_0)\bigr)
\right\}
\right\}.
\]
Thus, for a function $u$, set
\[
\sigma_\varepsilon(x,y)
:=
\min\{\sigma_\varepsilon(x),\sigma_\varepsilon(y)\}.
\]
There exists $R>0$, independent of $\varepsilon$, such that, for $x,y$ satisfying $d_{\widehat g_\varepsilon}(x,y)<R\,\sigma_\varepsilon(x,y)$, there is a unique minimizing $\widehat g_\varepsilon$-geodesic $\gamma_{xy}$ joining $x$ to $y$. Let $P_{y\to x}:T_yX_0^J\longrightarrow T_xX_0^J$ denote parallel transport along $\gamma_{xy}$ from $y$ to $x$. With this convention,
\[
\begin{aligned}
\|u\|_{C^{k,\alpha}_{\delta,\widehat g_\varepsilon}}
:={}&
\sum_{j=0}^k
\sup_{X_0^J}
\sigma_\varepsilon^{-\delta+j}
\left|
\nabla_{\widehat g_\varepsilon}^j u
\right|_{\widehat g_\varepsilon}
\\
&\quad+
\sup_{\substack{x\neq y\\
d_{\widehat g_\varepsilon}(x,y)
<
R\,\sigma_\varepsilon(x,y)}}
\sigma_\varepsilon(x,y)^{-\delta+k+\alpha}
\frac{
\left|
\nabla_{\widehat g_\varepsilon}^k u(x)
-
P_{y\to x}
\bigl(\nabla_{\widehat g_\varepsilon}^k u(y)\bigr)
\right|_{\widehat g_\varepsilon(x)}
}{
d_{\widehat g_\varepsilon}(x,y)^\alpha
}.
\end{aligned}
\]

By \cite[Theorem~3.4]{Hattori2026Harmonic}, one can perturb $\widehat\omega_\varepsilon$ to a Ricci-flat K\"ahler form by solving the complex Monge--Amp\`ere equation. More precisely, there exists $\varphi_\varepsilon\in C^\infty(X_0^J)$ such that
\begin{equation}
\omega_\varepsilon
=
\widehat\omega_\varepsilon
+
\sqrt{-1}\,\partial\bar\partial\varphi_\varepsilon,
\qquad
\omega_\varepsilon^2
=
\frac{W_\varepsilon}{2}\,
\Omega\wedge\overline{\Omega}.\label{eq:calabiyaueq}
\end{equation}

Here $\Omega$ denotes the fixed nowhere-vanishing holomorphic $2$-form
on $X_0^J$ extending $\pi_J^*\Omega_0$, where, in the $J$-holomorphic coordinates $q=z_1+z_2i$ on $\mathbb H$ introduced above,
\(
\Omega_0 := dz_1 \wedge dz_2.
\)
This choice descends to the regular part of $X_0$ and is normalized so that
\(
\omega_0^2 = \frac 12 \Omega_0 \wedge \overline\Omega_0.
\)
Moreover, for every fixed $k\geq0$, $\alpha\in(0,1)$, and
$\delta\in(-2,0)$, we have
\[
\|\varphi_\varepsilon\|_
{C^{k+2,\alpha}_{\delta,\widehat g_\varepsilon}}
\leq
C_{k,\alpha,\delta}\,\varepsilon^{3-\delta/2},
\]
and
\begin{equation}\label{eq:W}
|W_\varepsilon-1|
\leq C\varepsilon^4,
\end{equation}
where $C_{k,\alpha,\delta}$ is independent of sufficiently small $\varepsilon$. Consequently,
\begin{equation}
\|\omega_\varepsilon-\widehat\omega_\varepsilon\|_
{C^{k,\alpha}_{\delta-2,\widehat g_\varepsilon}}
\leq
C'_{k,\alpha,\delta}\,
\varepsilon^{3-\delta/2},
\qquad -2<\delta<0,
\label{eq:estimatepert}
\end{equation}
where again the constant is independent of $\varepsilon$, but may depend on $k$, $\alpha$, and $\delta$.

On the transition annulus, \cite[Eq.~(4)]{Hattori2026Harmonic} gives $\left|\widehat g_\varepsilon-\pi_J^*g_0\right|_{\pi^*_Jg_0} \leq C\varepsilon^2$. Together with \eqref{eq:etap}, this gives $\operatorname{diam}_{\widehat g_\varepsilon} U(p,2\varepsilon^{1/2})\leq C\varepsilon^{1/2}$. Since $\widehat g_\varepsilon=\pi_J^*g_0$ outside these regions, it follows that the resolution map 
\[\pi_J:(X_0^J,\widehat g_\varepsilon)\to(X_0,g_0)\] is a $C\varepsilon^{1/2}$-Gromov--Hausdorff approximation. On the other hand, \eqref{eq:estimatepert} gives, for $-2<\delta<0$, \[\left|g_\varepsilon-\widehat g_\varepsilon \right|_{\widehat g_\varepsilon} \leq C_\delta\varepsilon^{1+\delta/2}.\] Choosing $\delta\in(-1,0)$, the latter is $o(\varepsilon^{1/2})$. Hence \[\pi_J:(X_0^J,g_\varepsilon)\to(X_0,g_0)\] is still a $C\varepsilon^{1/2}$-Gromov--Hausdorff approximation. 

\noindent \textit{The extended $\mathbb Z_2$-action is isometric:} 
Since $\widetilde L_i$ is antiholomorphic, $\widetilde L_i^* g_\varepsilon = g_\varepsilon$ is equivalent to $-\widetilde L_i^*\omega_\varepsilon = \omega_\varepsilon$. Moreover, the form $-\widetilde L_i^*\omega_\varepsilon$ is again a Ricci-flat K\"ahler form on $(X_0^J,J)$. Since
$$[\omega_\varepsilon] = [\widehat\omega_\varepsilon]$$ in $H^2(X_0^J;\mathbb R)$, we have $$[-\widetilde L_i^*\omega_\varepsilon] = [-\widetilde L_i^*\widehat\omega_\varepsilon].$$

Thus, by uniqueness of the Ricci-flat K\"ahler form in a fixed K\"ahler class, it suffices to prove that
\[
[-\widetilde L_i^*\widehat\omega_\varepsilon]
=
[\widehat\omega_\varepsilon].
\]

Indeed, on
\[
X_0^J\setminus
\bigcup_{p\in\mathcal S(X_0)}U(p,2\varepsilon^{1/2})
\]
we have
\(
\widehat\omega_\varepsilon=\pi_J^*\omega_0. \) Since $L_i$ is an antiholomorphic isometry of $(X_0,J,g_0)$,
\(
L_i^*\omega_0=-\omega_0.
\)
Using
\[
\pi_J\circ\widetilde L_i=L_i\circ\pi_J,
\]
it follows that
\[
-\widetilde L_i^*\widehat\omega_\varepsilon
=
\widehat\omega_\varepsilon
\]
on this region. Recall that $H_2(X_0^J,\mathbb{R})$ is generated by the six torus classes coming from the orbifold part together with the sixteen exceptional classes. The torus representatives may be chosen inside the region where
\[
-\widetilde L_i^*\widehat\omega_\varepsilon
=
\widehat\omega_\varepsilon,
\]
so the two forms have the same pairing with these six classes. It remains to consider the exceptional curves. Since $\widetilde L_i$ is antiholomorphic, its restriction to an exceptional curve reverses its complex orientation. Thus, if
\[
\widetilde L_i(E_p)=E_q,
\]
then
\[
\int_{E_p}-\widetilde L_i^*\widehat\omega_\varepsilon
=
\int_{E_q}\widehat\omega_\varepsilon.
\]
All exceptional curves have the same area for $\widehat\omega_\varepsilon$, namely
\[
\varepsilon^2c, \qquad c:=\int_{\mathbb{CP}^1}\eta_p.
\]
Hence the two forms have the same pairing with a generating set of $H_2(X_0^J,\mathbb{R})$, and consequently
\[
[-\widetilde L_i^*\widehat\omega_\varepsilon]
=
[\widehat\omega_\varepsilon].
\]
By the uniqueness argument above,
\[
-\widetilde L_i^*\omega_\varepsilon=\omega_\varepsilon,
\]
and therefore
\[
\widetilde L_i^*g_\varepsilon=g_\varepsilon.
\]
\end{proof}

\begin{remark}
In the previous proof, note the importance of the action being antiholomorphic. If it were holomorphic, at one of the fixed singular points the induced action on the corresponding exceptional divisor would have fixed points. If $L_i$ were neither holomorphic nor antiholomorphic, it would not lift to the chosen complex resolution.
\end{remark}

\begin{theorem}\label{thm:firstsequencethm1}
For every sufficiently small $\varepsilon>0$, there exists a Ricci-flat metric $g_\varepsilon$ on
\[
M:=X_0^J/\mathbb Z_2
\]
such that
\[
d_{GH}\bigl((M,g_\varepsilon),X\bigr)<C\varepsilon^{1/2},
\]
where $C>0$ is independent of $\varepsilon$. Moreover, there exists a complex structure $I_\varepsilon$ on $M$ for which $(M,I_\varepsilon)$ is an Enriques surface and $g_\varepsilon$ is K\"ahler. In particular, $(M,g_\varepsilon,I_\varepsilon)$ is a Ricci-flat K\"ahler surface.
\end{theorem}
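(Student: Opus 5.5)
The plan is to descend everything in \Cref{lemma:example1_interm} through the free involution $\widetilde L_i$. By part~2 the action on $X_0^J$ is free, so $M:=X_0^J/\mathbb Z_2$ is a closed smooth four-manifold. The covering $X_0^J\to M$ is a double cover of a simply connected space (part~1), so $\pi_1(M)\cong\mathbb Z_2$. By part~3 the Ricci-flat metric $g_\varepsilon$ on $X_0^J$ is $\widetilde L_i$-invariant, hence descends to a Ricci-flat metric on $M$, which we still denote $g_\varepsilon$.

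For the Gromov--Hausdorff estimate we use part~4. The map $\pi_J:(X_0^J,g_\varepsilon)\to(X_0,g_0)$ is an equivariant $C\varepsilon^{1/2}$-approximation, so it induces a map $\bar\pi:M\to X$, and we need a general quotient lemma. If $f:Y\to Z$ is a $\delta$-GH approximation intertwining isometric actions of a finite group $G$, then the induced map $Y/G\to Z/G$ is a $2\delta$-approximation, or at worst a $C\delta$ one. The argument uses the quotient distance $d([y],[y'])=\min_{g}d(y,gy')$. Equivariance gives $d(f(y),gf(y'))=d(f(y),f(gy'))$, which agrees with $d(y,gy')$ up to $\delta$; minimizing over $g$ gives the distortion bound. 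Surjectivity up to $\delta$ descends directly. There is a subtlety: equivariance of $\pi_J$ holds exactly, but the approximation was proved with respect to $g_\varepsilon$ through estimates that are not obviously stated in equivariant form. The identity $\pi_J\circ\widetilde L_i=L_i\circ\pi_J$ already makes the estimate hold in this setting, with no further work. This yields $d_{GH}((M,g_\varepsilon),X)<C\varepsilon^{1/2}$.

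The complex structure is the main obstacle. $\widetilde L_i$ is antiholomorphic for $J$, so $J$ does not descend to $M$. Instead we exploit the hyperk\"ahler structure. Since $X_0^J$ is a K3 surface with Ricci-flat K\"ahler metric $g_\varepsilon$, its holonomy lies in $\mathrm{SU}(2)=\mathrm{Sp}(1)$ and $g_\varepsilon$ is hyperk\"ahler, with parallel forms $\omega_\varepsilon$, $\operatorname{Re}\Omega_\varepsilon$, $\operatorname{Im}\Omega_\varepsilon$, where $\Omega_\varepsilon$ is a suitably normalized constant multiple of $\Omega$ (this uses \eqref{eq:calabiyaueq}, with $W_\varepsilon$ constant). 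As an isometry, $\widetilde L_i$ preserves the three-dimensional space of parallel self-dual forms. It satisfies $\widetilde L_i^*\omega_\varepsilon=-\omega_\varepsilon$, and since it is antiholomorphic, $\widetilde L_i^*\Omega_\varepsilon$ is a parallel antiholomorphic $2$-form, hence a multiple of $\overline\Omega_\varepsilon$. We determine the multiple on the flat region. In the coordinates $q=z_1+z_2 i$, the map $A(z_1,z_2)=(-\bar z_2,\bar z_1)$ pulls $dz_1\wedge dz_2$ back to $d\bar z_1\wedge d\bar z_2$, so $\widetilde L_i^*\Omega=\overline\Omega$ there. By parallelism the identity holds globally, by unique continuation or simply because both sides are parallel and agree on an open set.

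Consequently $\widetilde L_i$ fixes $\operatorname{Re}\Omega_\varepsilon$ and reverses $\omega_\varepsilon$ and $\operatorname{Im}\Omega_\varepsilon$. Let $I_\varepsilon$ be the complex structure whose K\"ahler form is $\operatorname{Re}\Omega_\varepsilon$, which is one of the hyperk\"ahler complex structures. Since $\widetilde L_i$ preserves both $g_\varepsilon$ and this K\"ahler form, it is $I_\varepsilon$-holomorphic. Hence $I_\varepsilon$ descends to $M$, and $g_\varepsilon$ is K\"ahler on $(M,I_\varepsilon)$. The $I_\varepsilon$-holomorphic $2$-form is proportional to $\omega_\varepsilon+\sqrt{-1}\operatorname{Im}\Omega_\varepsilon$, which $\widetilde L_i$ negates. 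Therefore $(X_0^J,I_\varepsilon)$ is a K3 surface with a free holomorphic involution acting by $-1$ on $H^{2,0}$. Its quotient $(M,I_\varepsilon)$ is by definition (equivalently, via $\chi(\mathcal O)=1$ and $p_g=q=0$, or via $\pi_1=\mathbb Z_2$ with K3 cover) an Enriques surface, completing the proof.
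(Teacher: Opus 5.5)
Your proposal is correct and follows essentially the same route as the paper. You descend the $\widetilde L_i$-invariant Ricci-flat metric and the equivariant Gromov--Hausdorff approximation to the quotient. You then use the hyperk\"ahler triple and the identity $\widetilde L_i^*\Omega=\overline\Omega$, read off from $A(z_1,z_2)=(-\overline z_2,\overline z_1)$, to show that the complex structure $I_\varepsilon$ with K\"ahler form $\operatorname{Re}\Omega_\varepsilon$ is preserved by $\widetilde L_i$ and descends, so the quotient is an Enriques surface. You also supply a few details the paper leaves implicit: the finite-quotient lemma for Gromov--Hausdorff approximations, the extension of $\widetilde L_i^*\Omega=\overline\Omega$ across the exceptional curves, and the fact that the involution acts by $-1$ on $H^{2,0}$.
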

\begin{proof}
By \Cref{lemma:example1_interm}, the free $\mathbb Z_2$-action on
$X_0^J$ acts by isometries of $g_\varepsilon$. Hence $g_\varepsilon$
descends to a Ricci-flat metric on
\[
M=X_0^J/\mathbb Z_2,
\]
which we continue to denote by $g_\varepsilon$. The equivariant
$C\varepsilon^{1/2}$-Gromov--Hausdorff approximation
\[
\pi_J:(X_0^J,g_\varepsilon)\longrightarrow(X_0,g_0)
\]
therefore descends to a $C\varepsilon^{1/2}$-Gromov--Hausdorff
approximation
\[
\pi:(M,g_\varepsilon)\longrightarrow X.
\]
This proves the Gromov--Hausdorff estimate.

It remains to endow $M$ with a complex structure for which $g_\varepsilon$ is K\"ahler. Note that the complex structure $J$ itself does not descend, since the $\mathbb Z_2$-action induced by $L_i$ is $J$-antiholomorphic. Set $\Omega_\varepsilon:=\sqrt{W_\varepsilon}\,\Omega$. By \eqref{eq:calabiyaueq}, we have
\[
\omega_\varepsilon^2
=
\frac12\,\Omega_\varepsilon
\wedge\overline{\Omega_\varepsilon}.
\]
Since $g_\varepsilon$ is Ricci-flat K\"ahler on the compact surface $X_0^J$, the holomorphic $2$-form $\Omega_\varepsilon$ is parallel. Together with the normalization above, this implies that the three parallel $2$-forms
\[
\omega_\varepsilon,\qquad
\operatorname{Re}\Omega_\varepsilon,\qquad
\operatorname{Im}\Omega_\varepsilon
\]
form a hyperk\"ahler triple compatible with $g_\varepsilon$. Let $I_\varepsilon$ denote the complex structure characterized by
\[
g_\varepsilon(I_\varepsilon\cdot,\cdot)
=
\operatorname{Re}\Omega_\varepsilon
=
\sqrt{W_\varepsilon}\,\operatorname{Re}\Omega.
\]
In the $J$-holomorphic coordinates used above, the lift $\widetilde L_i$ is induced by
\[
A(z_1,z_2)=(-\overline z_2,\overline z_1),
\]
and therefore it is easy to check that
\(
\widetilde L_i^*\Omega=\overline{\Omega}.
\) In particular,
\[
\widetilde L_i^*\operatorname{Re}\Omega
=
\operatorname{Re}\Omega.
\]
Since
\(
\widetilde L_i^*g_\varepsilon=g_\varepsilon,
\)
the defining relation for $I_\varepsilon$ gives
\[
\widetilde L_i^*I_\varepsilon=I_\varepsilon.
\]
Thus $I_\varepsilon$ descends to the quotient $M$, and the descended metric $g_\varepsilon$ is K\"ahler with respect to it. Finally, $X_0^J$ is a K3 surface and the $\mathbb Z_2$-action is free and holomorphic with respect to $I_\varepsilon$. Therefore, the quotient $(M,I_\varepsilon)$ is an Enriques surface.
\end{proof}

\subsection{The sequence $(N,h_\varepsilon)$ and proof of  \Cref{thm:main1}}

Here we carry out the construction of the family $(N,h_\varepsilon)$ appearing in \Cref{thm:main1}, and   complete the proof of \Cref{thm:main1}. In the notation of \Cref{fig:genscheme}, this manifold plays the role of $X^{\widetilde J}$, for a suitable orthogonal linear complex structure $\widetilde J$ on the central orbifold $X$.

\begin{lemma}\label{thm:1-2}
There exist an orthogonal linear complex structure $\widetilde J$ on $X$, a complex resolution
\[
\pi_{\widetilde J}:N=X^{\widetilde J}\longrightarrow X,
\]
and constants $\varepsilon_0,C>0$, such that $N$ is a K3 surface. Moreover, for every $0<\varepsilon<\varepsilon_0$, $N$ admits a Ricci-flat K\"ahler metric $h_\varepsilon$ with respect to $\widetilde J$, satisfying
\[
d_{GH}\bigl((N,h_\varepsilon),X\bigr)<C\varepsilon^{1/2}.
\]
In particular, $N$ is simply connected.
\end{lemma}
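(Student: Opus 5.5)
We take $\widetilde J:=L_i$, left quaternionic multiplication by $i$; it commutes with $L_i$ and with $-\mathrm{Id}$. Hence the whole group $\widetilde G=\langle L_i\rangle\cong\mathbb Z_4$ acts $\widetilde J$-holomorphically on $\mathbb H$, and the flat Kähler structure descends to $X=\mathbb T^4/\langle L_i\rangle$. In $\widetilde J$-holomorphic coordinates $q=w_1+jw_2$ with $w_1,w_2\in\mathbb C_i$, we have $L_i(q)=iw_1+j(\overline{i}\,w_2)$, because $ij=-ji$. So $L_i$ acts by $(w_1,w_2)\mapsto(iw_1,-iw_2)$, which lies in $\mathrm{SU}(2,\widetilde J)$, and $L_i^2=-\mathrm{Id}$.

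\textbf{Local models.} The four points of \eqref{eq:fixpointsLi} become $\mathbb C^2/\mathbb Z_4$ singularities of type $A_3$. The six images of the free orbits remain $A_1$ singularities. We must also check for points of $\mathbb T^4$ fixed by $L_i$ but not by $-\mathrm{Id}$; there are none, since $L_i$ fixing a point implies $L_i^2=-\mathrm{Id}$ fixes it, so all such points are among the sixteen 2-torsion points. The holomorphic volume form $dw_1\wedge dw_2$ is $\widetilde G$-invariant and descends to the regular part of $X$. We then take the minimal resolution $N=X^{\widetilde J}$: insert a chain of three $(-2)$-curves at each $A_3$ point and one $(-2)$-curve at each $A_1$ point. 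This is a crepant resolution, so $N$ carries a nowhere-vanishing holomorphic $2$-form and $K_N$ is trivial.

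\textbf{Topology.} For the K3 claim it suffices, by the classification of surfaces, to show that $N$ is compact Kähler with trivial canonical bundle and $b_1(N)=0$ (equivalently $\chi=24$). We compute the orbifold Euler characteristic: $\chi(\mathbb T^4)=0$, so the regular part of $X$ has Euler characteristic $0-\tfrac{16}{4}\cdot$ (a count we would do via the stratification). The cleaner route is to compute $\chi(X)$ from the fixed data. The group $\mathbb Z_4$ has nontrivial elements $L_i^{\pm1}$ with $4$ fixed points each and $-\mathrm{Id}$ with $16$. The orbifold formula gives $\chi(\mathbb T^4/\mathbb Z_4)=\tfrac14(0+4+16+4)=6$. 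Replacing each $A_3$ point by a chain of three spheres adds $3$, and each $A_1$ point adds $1$, so $\chi(N)=6+4\cdot3+6\cdot1=24$. Alternatively, $b_1(N)=b_1(X)=\dim H^1(\mathbb T^4)^{\mathbb Z_4}=0$, since $L_i$ has no invariant vectors. Either way $N$ is a K3 surface, hence simply connected. This matches \cite[Theorem~2.14]{Hattori2026Harmonic} for ADE quotients, which we may cite directly.

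\textbf{Metrics.} Following \cite{Hattori2026Harmonic} exactly as in \Cref{lemma:example1_interm}, we glue rescaled Kronheimer ALE hyperkähler metrics at scale $\varepsilon$ to the flat metric on annuli of radius $\varepsilon^{1/2}$. For $A_3$ these are the Gibbons--Hawking multi-Eguchi--Hanson metrics on the minimal resolution of $\mathbb C^2/\mathbb Z_4$; for $A_1$ they are Eguchi--Hanson. Interpolating Kähler potentials as in \eqref{eq:gluingdata} gives approximate forms $\widehat\omega_\varepsilon$. Then \cite[Theorem~3.4]{Hattori2026Harmonic} solves the Monge--Ampère equation with the same weighted estimate \eqref{eq:estimatepert}, which yields a Ricci-flat Kähler metric $h_\varepsilon$. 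The Gromov--Hausdorff bound follows verbatim from the earlier argument. The resolution pieces have diameter $\le C\varepsilon^{1/2}$, since the ALE metrics are $O(r^{-4})$ asymptotic to flat cones and $|\widehat g_\varepsilon-\pi^*g_0|\le C\varepsilon^2$ on annuli. The perturbation is $o(\varepsilon^{1/2})$ in $C^0$ when $\delta\in(-1,0)$.

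\textbf{Main obstacle.} The step needing the most care is ensuring that the ALE rate and the gluing estimates of \cite{Hattori2026Harmonic} apply uniformly to the $A_3$ points. Those estimates are stated for general $\Gamma<\mathrm{SU}(2)$, so the task reduces to verifying that our local actions are in $\mathrm{SU}(2,\widetilde J)$, which was checked above.
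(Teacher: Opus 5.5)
\textbf{The choice $\widetilde J:=L_i$ is wrong, and it contradicts your own coordinate computation.} If the complex structure is $L_i$ itself, then in genuinely $L_i$-holomorphic coordinates, e.g.\ $q=w_1+w_2j$ with $w_1,w_2\in\mathbb C_i$, the generator $L_i$ acts as the scalar $i\cdot\mathrm{Id}$. Its complex determinant is $-1$, so $L_i\notin\mathrm{SU}(2,\widetilde J)$. The consequences are fatal:
\begin{itemize}
\item the four $\mathbb Z_4$-points become $\tfrac14(1,1)$ singularities (cones over $L(4,1)$), not $A_3$;
\item their minimal resolution is a single $(-4)$-curve, which is not crepant;
\item $dw_1\wedge dw_2$ is anti-invariant, so no holomorphic $2$-form descends.
\end{itemize}
The resulting surface has $p_g=q=0$ and Euler number $16$, hence $K^2=-4$ by Noether's formula. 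So it is not K3, and it carries no Ricci-flat K\"ahler metric in this complex structure. Commuting with the group only makes the action holomorphic; lying in $\mathrm{SU}(2)$ is a separate requirement. Taking the complex structure to be the generator itself is exactly what destroys that requirement.

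The coordinates $q=w_1+jw_2$ that you actually compute in are holomorphic for $R_i$, not for $L_i$. In them $R_i$ acts by $(iw_1,iw_2)$, while $L_i$ acts by $(iw_1,-iw_2)$, which is not multiplication by $i$. So the fix is to take $\widetilde J=R_i$, as the paper does. $R_i$ commutes with $L_i$ because left and right quaternionic multiplications commute. With that choice, the following steps of yours are all correct:
\begin{itemize}
\item the computation $(w_1,w_2)\mapsto(iw_1,-iw_2)\in\mathrm{SU}(2,R_i)$;
\item the invariance of $dw_1\wedge dw_2$;
\item the check that no non-$2$-torsion points have nontrivial stabilizer;
\item the count of four $A_3$ and six $A_1$ points.
\end{itemize}

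After this correction your argument agrees with the paper's. The one difference is that you verify the K3 property directly rather than citing \cite[Theorem~2.14]{Hattori2026Harmonic}. You do this either through $\chi(\mathbb T^4/\mathbb Z_4)=\tfrac14(0+4+16+4)=6$, hence $\chi(N)=6+4\cdot3+6\cdot1=24$, or through $b_1=0$ together with trivial $K_N$. This is a worthwhile self-contained check. You should also delete the garbled sentence about the Euler characteristic of the regular part.
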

\begin{proof} 
Recall that the action is induced by left quaternionic multiplication by $i$, namely $L_i$. We choose the orthogonal linear complex structure $\widetilde J=R_i$, given by right multiplication by $i$. Since left and right quaternionic multiplications commute, $L_iR_i=R_iL_i$, and hence the action is holomorphic with respect to $\widetilde J$. Therefore, $\widetilde J$ descends to a complex structure on the central orbifold $X=X_0/G$. 

Writing $q=z_1+jz_2$, with $z_1,z_2\in\mathbb C_i$, gives $\widetilde J$-holomorphic coordinates identifying $(\mathbb H,R_i)$ biholomorphically and isometrically with $\mathbb C^2$. In these coordinates, the action of $L_i$ is given by
\[
    (z_1,z_2)\longmapsto(iz_1,-iz_2).
\]
Thus $L_i\in SU(2,\widetilde J)$, and consequently the singularities of $X$ are of ADE type. More precisely, $X$ has four singularities of type $A_3$ and six singularities of type $A_1$.

Let $N=X^{\widetilde J}$ be the minimal complex resolution of $X$, obtained as in \eqref{eq:resol}, replacing neighborhoods of its $A_3$ and $A_1$ singularities by their corresponding minimal resolutions. Since $L_i \Lambda = \Lambda$, we may apply \cite[Theorem~2.14]{Hattori2026Harmonic}. Therefore $N$ is a $K3$ surface and, in particular, is simply connected.

Finally, applying the construction of \cite[Sections~2.2--2.4 and Theorem~3.4]{Hattori2026Harmonic}, and
arguing as in the proof of \Cref{lemma:example1_interm}, we obtain a Ricci-flat K\"ahler metric $h_\varepsilon$ on $N$. Denote its K\"ahler form by $\widetilde\omega_\varepsilon$. Let $\widetilde\omega_0$ be the flat K\"ahler form on the regular part of $X$ with respect to $\widetilde J$, and let $\widetilde\Omega_0$ be the flat holomorphic $2$-form defined in the $\widetilde{J}$-holomorphic coordinates  $q=z_1+jz_2$ on $\mathbb H$ by $\widetilde{\Omega}_0=dz_1\wedge dz_2$. Since $L_i^* \widetilde{\Omega}_0 = \widetilde{\Omega}_0$, it descends to the regular part of $X$, and
\[
\widetilde\omega_0^2 = \frac12\widetilde\Omega_0 \wedge\overline{\widetilde\Omega_0}.
\]
Let $\widetilde\Omega$ denote the nowhere-vanishing holomorphic $2$-form on $N$ extending $\pi_{\widetilde J}^*\widetilde\Omega_0$. Then
\begin{equation}\label{eq:W_tilde}
\widetilde\omega_\varepsilon^2 = \frac{\widetilde W_\varepsilon}{2}\,
\widetilde\Omega\wedge\overline{\widetilde\Omega}, \qquad |\widetilde W_\varepsilon-1| \leq C\varepsilon^4.
\end{equation}
Moreover,
\[
d_{GH}\bigl((N,h_\varepsilon),X\bigr)< C\varepsilon^{1/2}.
\]
\end{proof}

\begin{remark}
The preceding coordinate description shows directly that the present construction is exactly the one considered in \cite[Example~8(2)]{Hattori2026Harmonic}. Indeed, writing
$q=z_1+jz_2$ identifies $(\mathbb H,R_i)$ biholomorphically and isometrically with $\mathbb C^2$, identifies the lattice
$\Lambda=\mathbb Z\langle 1,i,j,k\rangle$ with the lattice used there, and represents $L_i$ by
\[
(z_1,z_2)\longmapsto(iz_1,-iz_2).
\]
Since
\[
X=X_0/G=\mathbb T^4/\langle L_i\rangle,
\]
the central orbifold $X$ is precisely the orbifold of
\cite[Example~8(2)]{Hattori2026Harmonic}.
\end{remark}

We now assemble the two constructions to prove \Cref{thm:main1}.

\begin{proof}[Proof of \Cref{thm:main1}]
By \Cref{thm:firstsequencethm1}, for every sufficiently small $\varepsilon>0$ there is a Ricci-flat K\"ahler metric $g_\varepsilon$ on the Enriques surface $(M,I_\varepsilon)$ such that
\[
d_{GH}\bigl((M,g_\varepsilon),X\bigr) <C\varepsilon^{1/2}.
\]
By \Cref{thm:1-2}, there is a Ricci-flat K\"ahler metric $h_\varepsilon$ on the $K3$ surface 
\[
N = X^{\widetilde J}
\]
such that
\[
d_{GH}\bigl((N,h_\varepsilon),X\bigr) <C\varepsilon^{1/2}.
\]
After decreasing $\varepsilon_0$ and enlarging $C$ if necessary, the same constants may be used for both families. Hence,
\[
(M,g_\varepsilon)\xrightarrow{GH}X, \qquad (N,h_\varepsilon)\xrightarrow{GH}X \qquad \text{as }\varepsilon\to0.
\]
The fundamental groups claimed follow directly from the two constructions. Indeed, $N$ is a simply connected $K3$ surface, and the $\mathbb Z_2$-action defining $M$ is free. Thus, $\pi_1(M) \cong \mathbb Z_2.$

It remains only to verify the uniform geometric bounds appearing in the statement. Since both families converge to the compact space $X$, their diameters are uniformly bounded. The volume bounds follow, for instance, from the normalization constants in the
Calabi--Yau equations. Indeed, \eqref{eq:calabiyaueq} and the normalization
of $\Omega_0$ give
\[
\operatorname{vol}_{g_\varepsilon}(M) = W_\varepsilon\,\operatorname{vol}_{g_0}(X).
\]
Similarly,
\[
\operatorname{vol}_{h_\varepsilon}(N) = \widetilde W_\varepsilon\,\operatorname{vol}_{g_0}(X).
\]
Since, by  \eqref{eq:W} and \eqref{eq:W_tilde},
\[
|W_\varepsilon-1| + |\widetilde W_\varepsilon-1| \leq C\varepsilon^4,
\]
after decreasing $\varepsilon_0$ if necessary, both volumes are bounded
below by
\[
\frac12\,\operatorname{vol}_{g_0}(X)>0.
\]

Finally, $N$ is a Ricci-flat K\"ahler $K3$ surface and hence is hyperk\"ahler. The universal cover $X_0^J$ of $M$, equipped with
$g_\varepsilon$, is likewise a Ricci-flat K\"ahler $K3$ surface and hence hyperk\"ahler. 
\end{proof}

\section{Proof of \Cref{thm:main2}}
\label{sec:thm2-first-construction}

In this section, we prove \Cref{thm:main2} following the scheme introduced in \Cref{sec:method}. We first describe the action of $\mathbb Z_2\times\mathbb Z_2$ on the Kummer orbifold $X_0$ and define the central orbifold $X$. We then give separate constructions of the two families $(P,g_\varepsilon)$ and $(M,h_\varepsilon)$.

\subsection{The action and the central orbifold}
We consider an action
\[
G\cong\mathbb Z_2\times\mathbb Z_2\curvearrowright X_0
\]
generated by the maps induced by the affine isometries of $\mathbb H$
\begin{equation}
L_i(q)=iq
\end{equation}
and
\begin{equation}
D(q)=L_uR_{u^{-1}}(q)+t,
\qquad
u:=\frac{j+k}{\sqrt2},
\qquad
t:=\frac12(1+i+j+k).
\label{eq:thm2-LiD}
\end{equation}

Indeed, the linear parts of $L_i$ and $D$ preserve the lattice $\Lambda$, so both maps descend to isometries of $\mathbb T^4$. Moreover, they normalize the central involution $\{\pm\mathrm{Id}\}$, and hence descend further to isometries of $X_0=\mathbb T^4/\{\pm\mathrm{Id}\}$. We continue to denote the induced maps by $L_i$ and $D$. On $X_0$ they satisfy
\[
L_i^2=D^2=\mathrm{Id},
\qquad
L_iD=DL_i.
\]
Thus they generate the action
\[
G:=\langle L_i,D\rangle
\cong \mathbb Z_2\times\mathbb Z_2
\curvearrowright X_0.
\]
The involution induced by $L_i$ is the one considered in \Cref{sec:provingthm1}: it acts freely on the regular part of $X_0$ and fixes precisely the four singular points listed in \eqref{eq:fixpointsLi}.

On the other hand, we have
\begin{equation}
\Fix(D)=\Fix(L_iD)=\varnothing.\label{eq:thm2-fixedsets}
\end{equation}
Indeed, writing $q=z_1+z_2j$, with $z_1,z_2\in\mathbb C_i$ and
$\eta=(1+i)/2$, we have
\[
D(z_1,z_2)=(\overline z_1+\eta,i\overline z_2+\eta),
\qquad
L_iD(z_1,z_2)=(i\overline z_1+i\eta,-\overline z_2+i\eta).
\]
A fixed point on $X_0$ would require $D(q)\equiv\pm q$ or $L_iD(q)\equiv\pm q\pmod{\Lambda}$. The corresponding coordinate equations have no solutions. Hence
\[
\Fix(D)=\Fix(L_iD)=\varnothing.
\]
The sixteen singular points therefore decompose into two $G$-orbits of cardinality two, with stabilizer generated by the involution induced by $L_i$, and three free $G$-orbits of cardinality four. We define the central orbifold for this example by
\begin{equation}
X:=X_0/G.
\label{eq:X1-def}
\end{equation}
Accordingly, the two stabilized orbits give rise to two singular points locally
modeled on $\mathbb H/\langle L_i\rangle$, where $\langle L_i\rangle\cong\mathbb Z_4$ since $L_i^2=-\mathrm{Id}$ on $\mathbb{H}$. The three free orbits retain the original Kummer local model $\mathbb H/\{\pm\mathrm{Id}\}$. Thus $X$ has two singularities of the former type and three of the latter.

\subsection{The sequence $(P,g_\varepsilon)$}
For the first sequence, we follow the scheme of \Cref{sec:method}: we resolve $X_0$ with respect to a suitable complex structure, extend the $G$-action freely to the resolution, construct $G$-invariant Ricci-flat K\"ahler metrics, and then take the quotient by $G$.

\begin{lemma}
\label{thm:thm2-first-P}
There exist an orthogonal linear complex structure $J$ on $X_0$, a complex resolution
\[
\pi_J:X_0^J\longrightarrow X_0,
\]
and constants $\varepsilon_0,C>0$ with the following properties.

\begin{enumerate}[topsep=0pt,label=\arabic*.]
\item The surface $X_0^J$ is a Kummer surface and, in particular, a $K3$ surface. Hence, $X_0^J$ is simply
connected.

\item The action of $G$ extends to a free action on $X_0^J$. With respect to $J$, $L_i$ acts antiholomorphically, whereas $D$ acts holomorphically.

\item For every $0<\varepsilon<\varepsilon_0$, $X_0^J$ admits a $G$-invariant Ricci-flat K\"ahler metric $g_\varepsilon$ such that
\[
\pi_J:(X_0^J,g_\varepsilon)
\longrightarrow (X_0,g_0)
\]
is a $G$-equivariant
$C\varepsilon^{1/2}$-Gromov--Hausdorff approximation.
\end{enumerate}
Consequently,
\begin{equation}
P:=X_0^J/G
\label{eq:P1-def}
\end{equation}
is a smooth manifold satisfying
\[
\pi_1(P)\cong\mathbb Z_2\times\mathbb Z_2,
\]
and the descended Ricci-flat metrics, which we also denote $g_\varepsilon$, satisfy
\[
d_{GH}\bigl((P,g_\varepsilon),X\bigr)
<C\varepsilon^{1/2}.
\]
\end{lemma}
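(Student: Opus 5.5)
We take $J=L_j$, exactly as in \Cref{lemma:example1_interm}, and reuse that proof for everything that concerns $L_i$. Item~1 and the antiholomorphic, free extension of $L_i$ to $X_0^J$ are already proved there: the lift acts on each fixed exceptional curve $E_p$ by $[z]\mapsto[Az]$ with $A^2=-\mathrm{Id}$ antilinear, so it has no fixed points. What remains for item~2 is $D$.

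The linear part of $D$ is $L_uR_{u^{-1}}$, and $L_u$ commutes with $R_{u^{-1}}$. The relations $uj=-ju$ (since $u\perp j$ in $\operatorname{Im}\mathbb H$) and $u^{-1}=-u$ give $L_uR_{u^{-1}}L_j=-L_uL_jR_{u^{-1}}=L_jL_uR_{u^{-1}}$. Hence $D$ commutes with $J$ and is holomorphic, and it is affine, so it preserves the flat structure. Holomorphic automorphisms of $X_0$ lift uniquely to the minimal resolution $X_0^J$, by the same uniqueness argument as for $L_i$ but without conjugation. Because $D$ has no fixed points on $X_0$ by \eqref{eq:thm2-fixedsets}, its lift permutes the sixteen exceptional curves without fixing any, so it is free. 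The same holds for $L_iD$, which is antiholomorphic and also fixed-point free by \eqref{eq:thm2-fixedsets}. The relations $L_i^2=D^2=\mathrm{Id}$ and $L_iD=DL_i$ lift by uniqueness of lifts, so $G$ acts on $X_0^J$. Since every nontrivial element is free on $X_0^J$, the action is free.

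For item~3 we take the approximate forms $\widehat\omega_\varepsilon$ from \Cref{lemma:example1_interm}, all with the same scale $\varepsilon$. We check that they are $G$-equivariant, in the sense that holomorphic elements preserve them and antiholomorphic elements send them to their negatives. This holds away from the gluing regions because $D^*\omega_0=\omega_0$ and $L_i^*\omega_0=-\omega_0$. On the gluing regions it holds because $D$ and $L_i$ map Eguchi--Hanson pieces to Eguchi--Hanson pieces, and the cutoffs depend only on $r_p$.

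Rather than check this pointwise, it is enough to compare cohomology classes, as in the proof of \Cref{lemma:example1_interm}. The torus classes pair equally because the forms agree there. The exceptional classes pair equally because all exceptional curves have area $\varepsilon^2c$. Since $D$ is holomorphic it preserves the orientation of each exceptional curve, so $[D^*\widehat\omega_\varepsilon]=[\widehat\omega_\varepsilon]$. By uniqueness of the Ricci-flat Kähler metric in a Kähler class (Calabi--Yau), we get $D^*\omega_\varepsilon=\omega_\varepsilon$ and $-L_i^*\omega_\varepsilon=\omega_\varepsilon$. Hence $g_\varepsilon$ is $G$-invariant.

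The Gromov--Hausdorff estimate for $\pi_J$ is the one already obtained in \Cref{lemma:example1_interm}, since the metric construction is unchanged. Equivariance of $\pi_J$ holds by construction of the lifts.

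For the consequences, $G$ acts freely and isometrically, so $P=X_0^J/G$ is a smooth Ricci-flat manifold. It has universal cover $X_0^J$, which is simply connected, and therefore $\pi_1(P)\cong G\cong\mathbb Z_2\times\mathbb Z_2$. The $G$-equivariant $C\varepsilon^{1/2}$-approximation $\pi_J$ descends to an approximation $P\to X_0/G=X$ with the same bound. This follows from the elementary fact that an equivariant $\delta$-approximation between isometric actions of a finite group induces a $2\delta$-approximation between the quotients with their quotient metrics.

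The main obstacle is checking carefully that $D$ is really $J$-holomorphic and normalizes $\{\pm\mathrm{Id}\}$ with the correct lattice behaviour, and that the lifts satisfy the group relations on $X_0^J$ and not only on $X_0$. For the relations, we argue that any lift of the identity of $X_0$ is the identity, since lifts agree on the dense regular part.
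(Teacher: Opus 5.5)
There is a genuine gap at the very first step: with $J=L_j$ the involution $D$ is neither holomorphic nor antiholomorphic. Your justification uses ``$u\perp j$ in $\operatorname{Im}\mathbb H$''. But $u=(j+k)/\sqrt2$ satisfies $\langle u,j\rangle=1/\sqrt2$, and indeed $uj=-(1+i)/\sqrt2$ while $ju=(-1+i)/\sqrt2$.

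Concretely, $C:=L_uR_{u^{-1}}$ is conjugation by $u$, i.e.\ the rotation by $\pi$ about $u$: it fixes $1$, sends $i\mapsto -i$, and swaps $j\leftrightarrow k$. Then $CL_j(1)=k$ whereas $L_jC(1)=j$, so $C$ is neither $L_j$-linear nor $L_j$-antilinear. Your displayed chain also has a spurious sign: $L_uR_{u^{-1}}L_j=L_uL_jR_{u^{-1}}$ with no minus. So even if $uj=-ju$ held, you would have obtained an antiholomorphic $D$.

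The consequence is not cosmetic. A real-linear map of $\mathbb C^2$ that is neither complex-linear nor antilinear sends some complex line to a non-complex plane. Hence $\varpi_{D(p)}^{-1}\circ D\circ\varpi_p$ does not even extend continuously across $E_p$, and $D$ does not lift to $X_0^{L_j}$ at all. Item~2 fails, and with it the lifted $G$-action, its freeness, and the class comparison for $D^*\widehat\omega_\varepsilon$.

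The choice of $J$ is in fact forced. Orthogonal complex structures on $\mathbb H$ are $L_a$ or $R_a$ with $a$ a unit imaginary quaternion. Every $R_a$ commutes with $L_i$, so $L_i$ would be holomorphic, contrary to item~2. And $L_a$ commutes with $C$ iff $ua=au$, i.e.\ $a=\pm u$. So you must take $J=L_u$, as the paper does: $CL_u=L_uL_uR_{u^{-1}}=L_uC$, and $L_iL_u=-L_uL_i$ because $u\perp i$.

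With this $J$ you can no longer quote \Cref{lemma:example1_interm} verbatim for $L_i$, but its local argument transfers. $L_i$ is $L_u$-antilinear with $L_i^2=-\mathrm{Id}$ on $\mathbb H$, so on each of the four fixed exceptional curves it acts by $[z]\mapsto[Az]$ with $A$ antilinear and $A^2=-\mathrm{Id}$, hence freely. Also, $X_0^{L_u}$ is still a Kummer surface.

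After this correction, the rest of your outline is the same as the paper's proof: unique lifts through the minimal resolution, freeness of $D$ and $L_iD$ from \eqref{eq:thm2-fixedsets}, comparison of classes on toral and exceptional cycles, Calabi--Yau uniqueness, and descent to $P$.
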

The proof below parallels those of \Cref{lemma:example1_interm} and \Cref{thm:firstsequencethm1}. We therefore present the argument more briefly, referring to those proofs for the corresponding details.
\begin{proof} \mbox{} \\
\noindent\textit{Complex structure and topology:} Pick the orthogonal linear complex structure given by $J = L_{u}$ on $\mathbb H$, where $u = \tfrac{j+k}{\sqrt 2}$, as in \eqref{eq:thm2-LiD}. Note that the complex structure descends to $X_0$. Moreover, $$L_i L_{\tfrac{j+k}{\sqrt 2}} = -L_{\tfrac{j+k}{\sqrt 2}} L_i,$$ and therefore the $L_i$ action is $J-$antiholomorphic with respect to this complex structure. For $D$, note that only the linear part is relevant (see \eqref{eq:thm2-LiD}), and then from $(L_{u}R_{u^{-1}}) \circ L_u = L_u \circ (L_{u}R_{u^{-1}})$ it follows that $D$ is $J$-holomorphic.

As before, let $X_0^J$ be the minimal complex resolution of $X_0$, as in \eqref{eq:resol}. Recall that $X_0^J$ is the classical Kummer surface \cite[Theorem~2.14]{Hattori2026Harmonic}.

\noindent\textit{Extension and freeness of the extended action:} The holomorphic involution $D$ and the antiholomorphic involution $L_i$ lift uniquely through the minimal resolution. Indeed, away from the exceptional divisor the lifts are necessarily given by
\[
\pi_J^{-1}\circ D\circ\pi_J,
\qquad
\pi_J^{-1}\circ L_i\circ\pi_J,
\]
and the uniqueness of the minimal resolution, applied also to the conjugate complex structure in the antiholomorphic case, extends these maps uniquely across the exceptional curves; compare the proof of
\Cref{lemma:example1_interm}. The relations between the generators are preserved, so they define an action $$G \curvearrowright X_0^J.$$ 
We continue to use $D$ and $L_i$ to refer to the corresponding extended elements. Since $D$ and $DL_i$ have no fixed points on $X_0$, they do not stabilize any singular point and hence exchange the corresponding resolution pieces. Their lifts are therefore fixed-point free. For $L_i$, the same local argument as in the proof of \Cref{lemma:example1_interm} applies at each of the four fixed Kummer points: on the corresponding exceptional curve, the induced antiholomorphic map has square $-\mathrm{Id}$ before projectivization and hence has no fixed points. Thus the $G$-action on $X_0^J$ is free.

\medskip
\noindent\textit{Approximate and Ricci-flat K\"ahler metrics:}
The construction repeats verbatim the one in \Cref{lemma:example1_interm}. We denote by $\widehat\omega_\varepsilon$ the approximate K\"ahler form obtained by the gluing construction, and by $\omega_\varepsilon$ the unique Ricci-flat K\"ahler form in the K\"ahler class $[\widehat\omega_\varepsilon] \in H^2(X_0^J,\mathbb{R})$. Thus, for $\varepsilon>0$ sufficiently small, we obtain a Ricci-flat K\"ahler metric $g_\varepsilon$ on $X_0^J$, with K\"ahler form $\omega_\varepsilon$, such that the resolution map
\[
\pi_J:(X_0^J,g_\varepsilon)\longrightarrow(X_0,g_0)
\]
is a $C\varepsilon^{1/2}$-GH approximation. Moreover, $\pi_J$ is
$G$-equivariant.

\noindent\textit{The extended $G$-action acts by isometries:} As in \Cref{lemma:example1_interm}, the action of $L_i$ is by isometries. For $D$, since it is holomorphic, $D^*g_\varepsilon = g_\varepsilon$ is equivalent to $D^*\omega_\varepsilon = \omega_\varepsilon$. By uniqueness of the Ricci-flat K\"ahler representative in a fixed K\"ahler class, it suffices to show that $$[D^*\hat\omega_\varepsilon] = [\hat\omega_\varepsilon]$$ in $H^2(X_0^J,\mathbb{R})$. 

Indeed, on $X_0^J \backslash \bigcup_{p\in\mathcal S(X_0)}U(p,2\varepsilon^{1/2})$ we have $\widehat\omega_\varepsilon=\pi_J^*\omega_0$.
Since $D$ is a holomorphic isometry of $(X_0,J,g_0)$, we have
$D^*\omega_0=\omega_0$, and hence $D^*\widehat\omega_\varepsilon=\widehat\omega_\varepsilon$ on this region. Recall that $H_2(X_0^J,\mathbb R)$ is generated by the six torus
classes coming from the orbifold part together with the sixteen
exceptional classes. The torus representatives may be chosen inside
the region where $D^*\widehat\omega_\varepsilon=\widehat\omega_\varepsilon$, so the two forms have the same pairing with these six classes. 

It remains to consider the sixteen exceptional curves. Since $D$ is holomorphic, its restriction to an exceptional curve preserves its complex
orientation. Thus, if
\[
D(E_p)=E_q,
\]
then
\[
\int_{E_p}D^*\widehat\omega_\varepsilon
=
\int_{E_q}\widehat\omega_\varepsilon.
\]
Since all exceptional curves have the same $\widehat\omega_\varepsilon$-area, with $c=\int_{\mathbb{CP}^1}\eta_p$, the two forms have the same pairing with a generating set of $H_2(X_0^J,\mathbb R)$. Hence
\[
[D^*\widehat\omega_\varepsilon]
=
[\widehat\omega_\varepsilon],
\]
and uniqueness of the Ricci-flat representative in this class gives
\[
D^*g_\varepsilon=g_\varepsilon.
\]
Together with the $L_i$-invariance proved above, this shows that
$g_\varepsilon$ is $G$-invariant.

\noindent\textit{Passing to the quotient:} Since the action $G \curvearrowright X_0^J$ is free, it follows that $P:=X_0^J/G$ is a smooth manifold. Since $g_\varepsilon$ is $G$-invariant, it descends to a Ricci-flat metric on $P$, which we continue to denote by $g_\varepsilon$. Since $\pi_J$ is $G$-equivariant, it descends to a $C\varepsilon^{1/2}-$Gromov--Hausdorff approximation 
$$\pi_J:(P,g_\varepsilon) \longrightarrow (X,g_0),$$
and therefore $(P,g_\varepsilon)\xrightarrow[\varepsilon\to 0]{\mathrm{GH}}X$. Since $X_0^J$ is simply connected and the $G$-action is free, $\pi_1(P)\cong \mathbb Z_2\times \mathbb{Z}_2$. Finally, since $(X_0^J,g_\varepsilon)$ is a Ricci-flat K\"ahler $K3$ surface, it is hyperk\"ahler. Therefore the quotient
$(P,g_\varepsilon)$ is locally hyperk\"ahler.

\end{proof}
\subsection{The sequence $(M,h_\varepsilon)$  and proof of \Cref{thm:main2}}
Here we construct the sequence of Ricci-flat metrics on the manifold $M$ appearing in \Cref{thm:main2} and verify the properties listed there. For this second sequence, we work with the intermediate orbifold
\begin{equation}
Z:=X_0/\langle L_i\rangle.
\label{eq}
\end{equation}
Recall that $L_i^2=-\mathrm{Id}$ on $\mathbb T^4$, while the induced map on $X_0=\mathbb T^4/{\pm\mathrm{Id}}$ is an involution. Hence, the four $L_i$-fixed Kummer points listed in \eqref{eq:fixpointsLi} give rise to four singularities of $Z$ with local group $\mathbb Z_4$, modeled on $\mathbb H/\langle L_i\rangle$. The remaining twelve Kummer points form six $\langle L_i\rangle$-orbits and give six singularities with local group $\mathbb Z_2$, modeled on $\mathbb H/\{\pm\mathrm{Id}\}$.

Since $D$ commutes with $L_i$ on $X_0$, it descends to an involution of $Z$, whose action is free. Indeed, if a point of $Z$ were fixed by the induced involution, then one of its lifts to $X_0$ would be fixed either by $D$ or by $L_iD$. Since
\[
\operatorname{Fix}(D)
=
\operatorname{Fix}(L_iD)
=
\varnothing
\]
by \eqref{eq:thm2-fixedsets}, no such point exists.

We shall choose a complex structure $\widetilde J$ on $Z$ for which the action of $D$ extends freely to the complex resolution $Z^{\widetilde J}$. The second manifold will then be obtained as
\[
M:=Z^{\widetilde J}/\langle D\rangle;
\]
see \Cref{fig:diagram2}.

\begin{figure}[htbp]
\centering
\begin{tikzpicture}[>=stealth]

\node (H)   at (0,3.8) {$\mathbb H/\Lambda$};

\node (X0J) at (-2.8,2) {$X_0^J$};
\node (X0)  at (0,2)    {$X_0$};

\node (Z)   at (0,0.4) {$Z$};
\node (ZJ)  at (3.5,0.4) {$Z^{\widetilde J}$};

\node (M1)  at (-2.8,-1.5) {$P$};
\node (X1)  at (0,-1.5) {$X=X_0/G$};
\node (Q) at (3.5,-1.5) {$Z^{\widetilde J}/\langle D\rangle=M$};

\node[anchor=east] at (X0J.west)
    {$G=\langle L_i,D\rangle\curvearrowright$};

\node[anchor=west] at (ZJ.east)
    {$\curvearrowleft\langle D\rangle$};

\draw[->]
    (H) -- node[right] {$/\{\pm\mathrm{Id}\}$} (X0);

\draw[->]
    (X0J) -- node[above] {$\pi_J$}
    node[below] {\scriptsize $G$-equivariant} (X0);

\draw[->]
    (X0J) -- node[left] {$/G\ \text{(free)}$} (M1);

\draw[->]
    (X0) -- node[right] {$/\langle L_i\rangle$} (Z);

\draw[->]
    (ZJ) -- node[above] {$\pi_{\widetilde J}$}
    node[below] {\scriptsize $\langle D\rangle$-equivariant} (Z);

\draw[->]
    (Z) -- node[right] {$/\langle D\rangle$} (X1);

\draw[->]
    (ZJ) -- node[right] {$/\langle D\rangle \ \text{(free)}$} (Q);

\draw[->]
    (M1) -- node[above] {$\simeq$}
    node[below] {\scriptsize $\varepsilon$-GH} (X1);

\draw[->]
    (Q) -- node[above] {$\simeq$}
    node[below] {\scriptsize $\varepsilon$-GH} (X1);
\end{tikzpicture}
\caption{Diagram of the construction.}
\label{fig:diagram2}
\end{figure}

\begin{lemma}
\label{lem:thm2-first-M}
There exist an orthogonal linear complex structure $\widetilde J$ on $Z$, a complex resolution
\[
\pi_{\widetilde J}:
Z^{\widetilde J}\longrightarrow Z,
\]
and constants $\varepsilon_0,C>0$ with the following properties.

\begin{enumerate}[topsep=0pt,label=\arabic*.]
\item The surface $Z^{\widetilde J}$ is a $K3$ surface and, in particular, is simply connected.

\item The residual involution $D$ on $Z$ is antiholomorphic with respect to $\widetilde J$ and extends to a free antiholomorphic involution
\[
D:
Z^{\widetilde J}\longrightarrow Z^{\widetilde J}.
\]
\item For every $0<\varepsilon<\varepsilon_0$, $Z^{\widetilde J}$ admits a $D$-invariant Ricci-flat K\"ahler metric $\widetilde h_\varepsilon$ such that the resolution map
\[
\pi_{\widetilde J}: (Z^{\widetilde J},\widetilde h_\varepsilon) \longrightarrow (Z,g_0)
\]
is a $D$-equivariant $C\varepsilon^{1/2}$-Gromov--Hausdorff approximation.
\end{enumerate}
Consequently,
\begin{equation}
M
:=
Z^{\widetilde J}/\langle D\rangle
\label{eq:M1-def}
\end{equation}
is a smooth manifold satisfying
\[
\pi_1(M)\cong\mathbb Z_2,
\]
and the descended Ricci-flat metrics $h_\varepsilon$ satisfy
\[
d_{GH}\bigl((M,h_\varepsilon),X\bigr) < C\varepsilon^{1/2}.
\]
\end{lemma}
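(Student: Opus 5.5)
The plan is to reuse the second construction of \Cref{thm:main1} for the intermediate orbifold and to treat $D$ the way $L_i$ was treated in \Cref{lemma:example1_interm}. The key observation is that $Z=X_0/\langle L_i\rangle$ is exactly the central orbifold $X$ of \Cref{sec:provingthm1}. So I take $\widetilde J=R_i$, right multiplication by $i$. By the proof of \Cref{thm:1-2}, $\widetilde J$ descends to $Z$, the singularities of $Z$ are four $A_3$ points and six $A_1$ points, and the minimal resolution $Z^{\widetilde J}$ is a $K3$ surface. In particular $Z^{\widetilde J}$ is simply connected, which gives item 1.

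For item 2, only the linear part $L_uR_{u^{-1}}$ of $D$ matters. Left multiplications commute with $R_i$. For the right factor, $R_aR_b(q)=qba$ and $iu=-ui$, so $R_{u^{-1}}$ anticommutes with $R_i$. Hence $D$ is $\widetilde J$-antiholomorphic. Since $D$ acts freely on $Z$ (shown above via \eqref{eq:thm2-fixedsets}), it stabilizes no singular point. It therefore exchanges the four $A_3$ points in two pairs and the six $A_1$ points in three pairs. I lift $D$ exactly as in \Cref{lemma:example1_interm}:
\begin{enumerate}[topsep=0pt,label=(\alph*)]
\item on the regular part the lift is $\pi_{\widetilde J}^{-1}\circ D\circ\pi_{\widetilde J}$;
\item near each singular point $p$, the antiholomorphic isomorphism $B_t(p)\to B_t(D(p))$ lifts uniquely to the minimal resolutions, using uniqueness of the minimal resolution applied to the conjugate complex structure;
\item uniqueness of the lifts gives $D^2=\mathrm{Id}$ upstairs.
\end{enumerate}
Freeness is then automatic. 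The lift is conjugate to the free action on the regular part, and every exceptional chain is mapped to a disjoint one, so no fixed-point computation on exceptional curves is needed.

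For item 3 I would build the approximate Kähler form $\widehat\omega_\varepsilon$ as in \cite[Sections~2.2--2.4]{Hattori2026Harmonic}, but with $D$-symmetric gluing data. In each $D$-orbit $\{p,D(p)\}$, choose the Kronheimer ALE hyperkähler metric at $p$ freely, and define the metric at $D(p)$ as its transport under the lifted antiholomorphic map, so that $-D^*$ exchanges the two local Kähler forms. The Ricci-flat form $\omega_\varepsilon$ comes from \cite[Theorem~3.4]{Hattori2026Harmonic}. The GH estimate follows verbatim from the weighted estimates in \Cref{lemma:example1_interm}.

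Invariance is proved by the cohomological argument of that lemma: it suffices to show $[-D^*\widehat\omega_\varepsilon]=[\widehat\omega_\varepsilon]$, and uniqueness of the Ricci-flat form in a Kähler class then gives $-D^*\omega_\varepsilon=\omega_\varepsilon$, i.e. $D^*\widetilde h_\varepsilon=\widetilde h_\varepsilon$. Outside the gluing regions $\widehat\omega_\varepsilon$ is the flat form, and there $-D^*\widehat\omega_\varepsilon=\widehat\omega_\varepsilon$ because $D$ is an antiholomorphic isometry. For $H_2(Z^{\widetilde J};\mathbb R)$ I take as generators:
\begin{itemize}
\item the four $L_i$-invariant torus classes, which can be represented away from the singular points;
\item the $4\cdot3+6=18$ exceptional curves.
\end{itemize}
The total is $22$, as it must be for a $K3$. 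Because the gluing data are $D$-symmetric and $D$ reverses the complex orientation of the exceptional curves, $\int_{E}-D^*\widehat\omega_\varepsilon=\int_{D(E)}\widehat\omega_\varepsilon=\int_E\widehat\omega_\varepsilon$ for every exceptional curve $E$. Hence the two classes agree.

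For the consequences: $D$ acts freely and isometrically on the simply connected $Z^{\widetilde J}$, so $M=Z^{\widetilde J}/\langle D\rangle$ is smooth with $\pi_1(M)\cong\mathbb Z_2$. The equivariant approximation $\pi_{\widetilde J}$ descends to a $C\varepsilon^{1/2}$-GH approximation $M\to Z/\langle D\rangle=X_0/G=X$. Finally, $M$ is locally hyperkähler because its cover is a Ricci-flat Kähler $K3$.

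The main obstacle is the $D$-symmetric choice of gluing data at the $A_3$ points. One must check that transporting a Kronheimer metric by an antiholomorphic map of local models gives an admissible ALE Kähler metric, with respect to $\widetilde J$, whose asymptotics match the flat form near $D(p)$ as required by \cite{Hattori2026Harmonic}. One must also check that $D$ permutes the $A_3$ exceptional chains compatibly, so that the area identity holds curve by curve. I would first try to do this through the hyperkähler triple: the conjugation sends $(\omega,\operatorname{Re}\Omega,\operatorname{Im}\Omega)$ to a triple of the form $(-\omega,\pm\operatorname{Re}\Omega,\mp\operatorname{Im}\Omega)$, which stays inside Kronheimer's family.
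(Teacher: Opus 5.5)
Your proposal is correct and follows the paper's proof essentially step for step. It uses the same choice $\widetilde J=R_i$ and the same reduction to \Cref{thm:1-2}, the same unique antiholomorphic lift of $D$ with freeness inherited from the free action on $Z$, and the same argument from uniqueness of the Ricci-flat K\"ahler form in its class, checked on the $4+18$ generators of $H_2$. The one variation is in how you make the gluing data $D$-symmetric: you transport the ALE model at $p$ to $D(p)$ through the lifted map $\phi\colon U_p\to U_{D(p)}$, whereas the paper places a single equal-area $A_3$ model at all four $A_3$ points and a single Eguchi--Hanson model at the $A_1$ points. The obstacle you flag is routine, because $D$ is an antiholomorphic isometry of the flat model, so $-(\phi^{-1})^*\eta_p$ is Ricci-flat K\"ahler for $\widetilde J$, has potential $\varphi_p\circ\phi^{-1}$, and has the same flat asymptotics; transport also makes the curve-by-curve area identity automatic.
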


\begin{proof}

Consider $\widetilde{J} = R_i$. In $R_i$-holomorphic coordinates, $q = z_1 + j z_2$, the action of $L_i$ reads as $(z_1,z_2) \mapsto (iz_1, -iz_2)$. In particular, $L_i$ is holomorphic and $L_i \in SU(2,\widetilde{J})$. Therefore, $Z$ has ADE singularities. More precisely, it has four $A_3$ singularities and six $A_1$ singularities. Let
\[
\pi_{\widetilde J}:Z^{\widetilde J}\longrightarrow Z
\]
be the minimal resolution.

This is precisely the orbifold considered in \Cref{thm:1-2}. Hence \Cref{thm:1-2} implies that $Z^{\widetilde J}$ is a $K3$ surface and provides Ricci-flat K\"ahler metrics for which $\pi_{\widetilde J}$ is a $C\varepsilon^{1/2}$-Gromov--Hausdorff approximation. It remains to extend the residual involution $D$ and to verify that the Ricci-flat metrics can be chosen to be $D$-invariant.

The linear part of $D$, as given in \eqref{eq:thm2-LiD}, satisfies
\[
(L_uR_{u^{-1}})\circ R_i
=
-R_i\circ(L_uR_{u^{-1}}).
\]
It follows that $D$ is antiholomorphic with respect to $\widetilde J$. As in the proof of \Cref{lemma:example1_interm}, it therefore extends uniquely through the minimal resolution to an antiholomorphic involution
\[
D:Z^{\widetilde J}\longrightarrow Z^{\widetilde J}.
\]
Moreover, since $D$ acts freely on $Z$, its extension acts freely on $Z^{\widetilde J}$ as well. The resolution map is $D$-equivariant.

\noindent\textit{Invariant Ricci-flat K\"ahler metrics:}
As in the previous arguments, by uniqueness of the Ricci-flat representative, it suffices to show that
\[
[-D^*\widetilde\omega_\varepsilon] = [\widetilde\omega_\varepsilon] \quad\text{in }H^2(Z^{\widetilde J},\mathbb R),
\]
where $\widetilde\omega_\varepsilon$ is the approximating K\"ahler
form used in the perturbation argument; see \Cref{eq:gluingdata}.

In this case, $H_2(Z^{\widetilde J},\mathbb R)$ has a basis consisting of eighteen exceptional classes, three for each of the four $A_3$ singularities and one for each of the six $A_1$ singularities, together with four classes inherited from the torus. The required identity on the toral classes follows as before: $D$ is an antiholomorphic isometry of the flat orbifold, and $\widetilde\omega_\varepsilon$ agrees with the pullback of its K\"ahler form away from the gluing regions. 

On the other hand, $D$ permutes the exceptional curves and reverses their complex orientations. Thus, it remains to arrange the exceptional curves exchanged by $D$ so that they have the same $\widetilde\omega_\varepsilon$-area.
These areas are determined by the local Ricci-flat K\"ahler forms $\eta_p$ used in the gluing; see \Cref{eq:etap,eq:gluingdata}.

Hattori's construction uses the forms $\eta_p$ for $p \in \mathcal{S}(Z)$ provided by Joyce's theorem; see \cite[Theorem~2.8]{Hattori2026Harmonic} and \cite[Theorem~3.3]{Joyce2001ALE}. On a fixed minimal resolution, this theorem provides a unique Ricci-flat ALE K\"ahler metric in each K\"ahler class admitting an ALE K\"ahler representative, together with the required asymptotic estimates for its K\"ahler potential. At each $A_3$ singularity, we choose a class for which the three exceptional curves have the same positive area; such a choice is available, for instance, from
Kronheimer's parametrization of the $A_3$ ALE spaces \cite{Kronheimer1989ALE}.

We use the same normalized $A_3$ model at all four $A_3$ singularities, with the same gluing scale. Likewise, we use the same normalized
Eguchi--Hanson model at all six $A_1$ singularities. With these choices, exceptional curves exchanged by $D$ have equal
$\widetilde\omega_\varepsilon$-area. Consequently, for every exceptional curve $E$, oriented by its complex structure,
\[
\int_E-D^*\widetilde\omega_\varepsilon
=
\int_{D(E)}\widetilde\omega_\varepsilon
=
\int_E\widetilde\omega_\varepsilon,
\]
where $D(E)$ is also given its complex orientation. Together
with the identity on the toral classes, this proves
\[
[-D^*\widetilde\omega_\varepsilon]
=
[\widetilde\omega_\varepsilon].
\]
Hence $-D^*$ preserves the K\"ahler class of $\widetilde\omega_\varepsilon$. By uniqueness of the Ricci-flat K\"ahler metric in this class, the resulting Ricci-flat metric $\widetilde h_\varepsilon$ is $D$-invariant. Since $D$ acts freely, this metric descends to a Ricci-flat
metric $h_\varepsilon$ on
\[
M
=
Z^{\widetilde J}/\langle D\rangle.
\]
Since $Z^{\widetilde J}$ is simply connected and $D$ acts freely,
\[
\pi_1(M)\cong\mathbb Z_2.
\]
Moreover, the $D$-equivariant resolution map descends to a $C\varepsilon^{1/2}$-Gromov--Hausdorff approximation
\[
(M,h_\varepsilon)
\longrightarrow
Z/\langle D\rangle
=
X.
\]
Thus
\[
(M,h_\varepsilon)
\xrightarrow[\varepsilon\to0]{\mathrm{GH}}
X.
\]
Finally, since $(Z^{\widetilde J},\widetilde h_\varepsilon)$ is a Ricci-flat K\"ahler $K3$ surface, it is hyperk\"ahler, and therefore its free isometric quotient $(M,h_\varepsilon)$ is locally hyperk\"ahler.

\end{proof}

We now combine the two constructions above to prove \Cref{thm:main2}.

\begin{proof}[Proof of \Cref{thm:main2}]
By \Cref{thm:thm2-first-P}, for every sufficiently small $\varepsilon>0$ there is a Ricci-flat metric $g_\varepsilon$ on
\[
P=X_0^J/G
\]
such that
\[
d_{GH}\bigl((P,g_\varepsilon),X\bigr) <C\varepsilon^{1/2}.
\]
Likewise, \Cref{lem:thm2-first-M} provides a Ricci-flat metric $h_\varepsilon$ on
\[
M=Z^{\widetilde J}/\langle D\rangle
\]
satisfying
\[
d_{GH}\bigl((M,h_\varepsilon),X\bigr)<C\varepsilon^{1/2}.
\]
After decreasing $\varepsilon_0$ and enlarging $C$ if necessary, the
same constants may be used for both families. Hence
\[
(P,g_\varepsilon)\xrightarrow{GH}X, \qquad (M,h_\varepsilon)\xrightarrow{GH}X \qquad\text{as }\varepsilon\to0.
\]
Moreover, the same two lemmas give
\[
\pi_1(P)\cong \mathbb Z_2\times\mathbb Z_2, \qquad \pi_1(M)\cong \mathbb Z_2.
\]
The uniform diameter and volume bounds follow exactly as in the proof of \Cref{thm:main1}: the diameter bounds follow from convergence to the compact orbifold $X$, while the Calabi--Yau normalization constants in the two desingularizations tend to $1$, giving a uniform positive lower bound for the volumes after passing to the finite quotients.

Finally, by \Cref{thm:thm2-first-P,lem:thm2-first-M}, the universal covers of $P$ and $M$ are respectively $X_0^J$ and $Z^{\widetilde J}$, both equipped with Ricci-flat K\"ahler metrics on $K3$ surfaces. Hence both universal covers are hyperk\"ahler, and therefore $(P,g_\varepsilon)$ and $(M,h_\varepsilon)$ are locally hyperk\"ahler.
\end{proof}

\printbibliography

\end{document}